\documentclass[12pt,a4paper,reqno]{amsart}
\usepackage{latexsym,amsmath}
\usepackage[left=3cm,top=2.5cm,right=3cm,bottom=2.5cm]{geometry}
\usepackage[usenames,dvipsnames]{color}
\usepackage{amsthm}
\usepackage{amssymb}
\usepackage{epsfig}
\usepackage{mathrsfs}
\usepackage{verbatim}
\usepackage[normalem]{ulem}
\bibliographystyle{amsplain}
\usepackage{tikz}
\usepackage{hyperref}
\hypersetup{colorlinks=true,citecolor=blue}
\usepackage{lineno}
\usepackage{fullpage}
\usepackage{setspace}
\usepackage{enumerate}

\newcommand{\SC}{\mathcal{SC}}

\def\circcycle{C^\circlearrowright}

\def\R{\mathcal{R}}

\newcommand{\hm}[1]{\leavevmode{\marginpar{\tiny%
$\hbox to 0mm{\hspace*{-0.5mm}$\leftarrow$\hss}%
\vcenter{\vrule depth 0.1mm height 0.1mm width \the\marginparwidth}%
\hbox to 0mm{\hss$\rightarrow$\hspace*{-0.5mm}}$\\\relax\raggedright #1}}}
\newtheorem{theorem}{Theorem}[section]

\newtheorem{lemma}[theorem]{Lemma}
\newtheorem{fact}[theorem]{Fact}

\newtheorem{corollary}[theorem]{Corollary}
\newtheorem{proposition}[theorem]{Proposition}
\newtheorem{problem}[theorem]{Problem}

\theoremstyle{definition}

\newcommand{\oldqed}{}
\def\endofFact{\hfill\scalebox{.6}{$\Box$}}

\def\twins{\mathop{\text{\rm twins}}\nolimits}
\newcommand\normp[2]{\left\Vert #2 \right\Vert_{#1}}


\def\F{\vec{\mathcal{F}}}
\def\R{\vec{\mathcal{R}}}
\def\S{\vec{\mathcal{S}}}

\title{Counting orientations of graphs with no strongly connected tournaments}

\begin{document}
\onehalfspace

\author[F.~Botler]{F\'abio Botler}
\author[C.~Hoppen]{Carlos Hoppen}
\author[G.~O.~Mota]{Guilherme Oliveira Mota}

\address{Programa de Engenharia de Sistemas e Computação, Universidade Federal do Rio de Janeiro, Rio de Janeiro, Brazil}
\email{fbotler@cos.ufrj.br}

\address{Departamento de Matem\'atica Pura e Aplicada, Universidade Federal do Rio Grande do Sul, Porto Alegre, Brasil}
\email{choppen@ufrgs.br}

\address{Instituto de Matem\'atica e Estat\'{\i}stica, Universidade de S\~ao Paulo, S\~ao Paulo, Brazil}
\email{mota@ime.usp.br}

\thanks{\tiny
C.~Hoppen was partially supported by CNPq (308054/2018-0) and FAPERGS (19/2551-0001727-8).
F. Botler was supported by CNPq (423395/2018-1), and by FAPERJ (211.305/2019);
G.~O.~Mota was partially supported by CNPq (304733/2017-2, 428385/2018-4) and FAPESP (2018/04876-1, 2019/13364-7).
This study was financed in part by the Coordena\c{c}\~ao de Aperfei\c{c}oamento de Pessoal de N\'ivel Superior, Brasil (CAPES), Finance Code~001.
FAPERGS is the Rio Grande do Sul Research Foundation.
FAPERJ is the Rio de Janeiro Research Foundation. 
FAPESP is the S\~ao Paulo Research Foundation.  
CNPq is the National Council for Scientific and Technological
Development of Conselho Nacional de Desenvolvimento Cient\'{i}fico e
Tecnol\'{o}gico do Brasil.}

\begin{abstract}
Let $S_k(n)$ be the maximum number
of orientations of an $n$-vertex graph $G$ in which \emph{no copy} of $K_k$ is strongly connected.
For all integers $n$, $k\geq 4$ where $n\geq 5$ or $k\geq 5$, 
we prove that $S_k(n) =  2^{t_{k-1}(n)}$, where $t_{k-1}(n)$ is the number of edges of the $n$-vertex
$(k-1)$-partite Tur\'an graph $T_{k-1}(n)$, and that $T_{k-1}(n)$ is the only $n$-vertex graph with this number of orientations.
Furthermore, $S_4(4) = 40$ and this maximality is achieved only by $K_4$.
\end{abstract}

\maketitle

\section{Introduction}

Let $G$ be a graph and $\vec F$ be an oriented graph.
An orientation $\vec G$ of $G$ is \emph{$\vec F$-free} if $\vec G$ contains no copy of $\vec F$.
Given a family $\F$ of oriented graphs, denote by $\mathcal{D}(G,\F)$ the family of orientations of $G$ that are $\vec F$-free for every $\vec F \in \F$ and write $D(G, \F) = |\mathcal{D}(G,\F)|$.
Finally, let
\begin{equation}\label{def_eq}
D(n,\F)=\max\{D(G,\F) \colon |V(G)|=n\}.
\end{equation}
An $n$-vertex graph that achieves equality in~\eqref{def_eq} is called \emph{$\F$-extremal}.

The \emph{Tur\'an graph}, denoted by $T_{k-1}(n)$, is the $n$-vertex graph without copies of $K_k$ and maximum number of edges possible, which is the balanced complete $(k-1)$-partite graph.
For simplicity, we write $t_{k-1}(n)$ for $|E(T_{k-1}(n))|$.

The problem of determining $D(n,\F)$ has been solved by Alon and Yuster~\cite{AlYu06} for large~$n$ when $\F$ consists of a single \emph{tournament}. 
Recently, Ara\'ujo and the first and last authors~\cite{ArBoMo20+}
extended this result to every $n$ in the case where the
forbidden tournament is the strongly connected triangle, here denoted by $\circcycle_3$.
More precisely, these results are given as follows.
\begin{theorem}
\label{conj:AY}
For $n\geq 1$, we have
	$
	D(n,\{\circcycle_3\}) = \max\{2^{\lfloor n^2/4\rfloor},n!\}.
	$
	Furthermore, among all graphs~$G$ with $n\geq 8$ vertices, $D(G,\{\circcycle_3\}) = 2^{\lfloor n^2/4\rfloor}$ if and only if $G$ is the Tur\'an graph $T_{2}(n)$.
\end{theorem}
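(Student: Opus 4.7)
The plan is to induct on $n$, using the basic observation that an orientation of a graph is $\{\circcycle_3\}$-free if and only if every triangle of that graph is oriented transitively. This immediately yields two canonical extremal constructions: if $G$ is triangle-free then every one of its $2^{e(G)}$ orientations is valid, and Mantel's theorem gives $e(G) \leq \lfloor n^2/4\rfloor$ with equality only for $T_2(n)$; and if $G = K_n$ then the $\{\circcycle_3\}$-free orientations are precisely the $n!$ transitive tournaments. Since $\max\{2^{\lfloor n^2/4\rfloor}, n!\}$ equals $n!$ for $n \leq 7$ and $2^{\lfloor n^2/4\rfloor}$ for $n \geq 8$, the claimed formula matches these two constructions, and the uniqueness of $T_2(n)$ as the extremal graph is only asserted for $n \geq 8$.

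I would begin with the range $n \leq 7$, handling it by direct case analysis, since here $K_n$ competes with $T_2(n)$ and the extremal picture transitions from one to the other. For $n \geq 8$, I would show $D(G, \{\circcycle_3\}) \leq 2^{\lfloor n^2/4\rfloor}$, with equality only for $T_2(n)$, by induction on $n$. If $G$ is triangle-free we are done by Mantel. Otherwise, I would remove a vertex $v$ (chosen of minimum degree) and examine how many valid orientations of $G$ restrict to a given valid orientation of $G-v$: each of the $d(v)$ edges at $v$ has $2$ possible orientations, and for each triangle $\{u,v,w\}$ of $G$, the already-fixed orientation of $uw$ excludes exactly one of the four orientation pairs for $(uv, vw)$. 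The goal is to translate these local constraints into a strict inequality
\[
D(G, \{\circcycle_3\}) < 2^{d(v)} \cdot D(G-v, \{\circcycle_3\})
\]
whenever $v$ lies in a triangle, strong enough to beat the ratio $2^{\lfloor n^2/4\rfloor}/2^{\lfloor (n-1)^2/4\rfloor}$ provided by the inductive hypothesis together with the averaging bound $d(v) \leq 2e(G)/n$.

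For uniqueness when $n \geq 8$: if $G$ is triangle-free and $G \neq T_2(n)$, the strict form of Mantel's theorem gives $D(G, \{\circcycle_3\}) < 2^{\lfloor n^2/4\rfloor}$; if $G$ contains a triangle, the strict inequality produced in the inductive step suffices. A short additional argument is needed to rule out the possibility that a graph with triangles can match $T_2(n)$ by compensating extra edges against lost orientations.

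The main obstacle will be quantifying the triangle savings sharply. A naive product bound of $(3/4)^{t_v}$ per triangle through $v$ is not truly an upper bound, since triangles that share non-$v$ edges become correlated and can admit \emph{more} valid orientations than a product estimate predicts; conversely, when counted correctly, the savings depend subtly on the structure of $G[N(v)]$. Closing this gap will likely require either an entropy/Shearer-type counting argument, or a stability-style dichotomy that treats graphs close to $T_2(n)$ (where the pure edge count nearly saturates the bound) separately from graphs with many triangles (where the transitivity constraint is strong enough to dominate). A secondary difficulty is the explicit verification for $n \leq 7$, where the optimal construction changes from $K_n$ to $T_2(n)$ and must be dealt with directly.
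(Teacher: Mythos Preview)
The paper does not prove Theorem~\ref{conj:AY}; it is quoted as a known result, attributed to Alon--Yuster~\cite{AlYu06} for large $n$ and to Ara\'ujo--Botler--Mota~\cite{ArBoMo20+} for all $n$. There is therefore no proof in the paper to compare your proposal against.

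As for your proposal on its own merits: it is an outline, not a proof, and you say so yourself. The two extremal constructions and the transition at $n=7,8$ are correctly identified, and the inductive framework (delete a vertex, bound the number of valid extensions) is a natural first move. But the step you flag as ``the main obstacle'' is exactly where the argument currently has no content. The trivial inequality $D(G,\{\circcycle_3\}) < 2^{d(v)} D(G-v,\{\circcycle_3\})$ when $v$ lies in a triangle is far too weak quantitatively: already for $G=K_8$ it gives $D(K_8) < 2^{7}\cdot 7! = 645120$, against the target $2^{16}=65536$. To make the induction close for $n\ge 8$ you need the number of valid extensions to be at most roughly $2^{n/2}$ on average, not merely strictly less than $2^{d(v)}$; this is a savings factor of order $2^{d(v)-n/2}$, which for dense $G$ is enormous. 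Your two suggested remedies (entropy/Shearer, or a stability dichotomy) are plausible directions, but neither is carried out, and each is itself a substantial argument. The base cases $n\le 7$ are also left as ``direct case analysis'' without indication of how to certify that $K_n$ beats every other $n$-vertex graph, which is not entirely trivial for $n=6,7$.

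In short: the skeleton is reasonable and the self-diagnosis of the difficulty is accurate, but what you have written does not yet contain the key idea that controls the extension count sharply enough, so as it stands it is not a proof.
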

More recently, Buci\'c and Sudakov~\cite{BuSu20+} determined $D(n,\{\circcycle_{2k+1}\})$ for strongly connected odd cycles $\circcycle_{2k+1}$ when $n$ is sufficiently large.

Clearly, if $\F$ and $\F'$ are families of oriented graphs such that $\F \subseteq \F'$, then $D(n,\F') \leq D(n,\F)$ for any integer $n$. Thus, the result of Alon and Yuster determining $D(n,\{\vec K_k\})$ for any fixed tournament $\vec K_k$ on $k \geq 3$ vertices and sufficiently large $n$ immediately implies the following: for any nonempty family $\F$ of orientations of $K_k$, we also have $D(n,\F)=2^{t_{k-1}(n)}$ for large $n$. More generally, if $\F$ is a family of tournaments (not necessarily with the same size) and $k$ is the minimum size of a tournament in $\F$, we must also have $D(n,\F)=2^{t_{k-1}(n)}$ for large $n$.
However, to the best of our knowledge, the only nontrivial family $\F$ for which $D(n,\F)$ is known for all values of $n\geq 1$ is $\F=\{\circcycle_3\}$.
We extend Theorem~\ref{conj:AY} to the family $\S_k$ of all strongly connected tournaments on $k\geq 4$ vertices.
More precisely, we prove in Theorem~\ref{thm:Skn} that for all integers $n$, $k\geq 4$ where $n\geq 5$ or $k\geq
5$, we have $D(n,\S_k) =  2^{t_{k-1}(n)}$ and we show that $T_{k-1}(n)$ is the only $\S_k$-extremal graph.
For $n=k=4$, we have $D(n,\S_4) = 40$ and $K_4$ is the only $\S_4$-extremal graph.

We remark that our result implies a similar one for any family $\F$ of complete graphs with at least \(k\) vertices that contains $\S_k$. In fact,  it is enough that $\F$ is a family of graphs  with chromatic number at least $k$ for which $\vec{\mathcal{S}}_k \subset \F$.
More precisely, one can easily conclude that for $n\geq 5$ or $k\geq
5$ we have $D(n,\F) = 2^{t_{k-1}(n)}$ and that $T_{k-1}(n)$ is the only $\F$-extremal graph.
As an application of this, consider the family $\R_k$ of all orientations of $K_k$ that are not transitive, so that $D(n,\R_k)$ is the maximum number of orientations of an $n$-vertex graph for which every copy of $K_k$ is transitively oriented.
Since $\S_k\subset \R_k$, our result implies that for $n\geq 5$ or $k\geq
5$ we have $D(n,\R_k) = 2^{t_{k-1}(n)}$ and $T_{k-1}(n)$ is the unique $\R_k$-extremal graph (for $n=k=4$, it is easy to see that $T_3(4)$ is the unique $\R_4$-extremal graph).
Our results for $\R_k$ and $\S_k$ solve Problems 3 and 4 in~\cite{ArBoMo20+}.

In Section~\ref{sec:multi} we prove general results for orientations avoiding graphs from a family $\F$ of tournaments (see Lemmas~\ref{thm_existence} and~\ref{lemma:containsmultipartite}).
These results imply that if $G$ is an $\F$-extremal graph that is not complete and multipartite,
then one can construct $\F$-extremal complete multipartite graphs $G_1$ and \(G_2\) such that $|E(G_2)| < |E(G)|\leq |E(G_1)|$.
We use these tools to prove our main result in Section~\ref{sec:main}.
We finish with some open problems in Section~\ref{sec:open}.

\section{Multipartite complete extremal graphs}
\label{sec:multi}

In this section we obtain some results derived with the approach in~\cite{BeHoSa17}, 
which in turn was influenced by the proof of Tur\'{a}n's Theorem by Zykov Symmetrization. 
To highlight the similarities, we use the notation of~\cite{BeHoSa17} whenever is possible.
Fix a family $\F$ of tournaments (not necessarily with the same number
of vertices)
on at least three vertices and fix a positive integer $n$.
We prove that there is a complete multipartite $n$-vertex graph that
is $\F$-extremal (see Lemmas~\ref{thm_existence} and~\ref{lemma:containsmultipartite}).

Let $\vec{x}$ be a vector whose coordinates are indexed by a set $T$.
Given $t\in T$, we denote by~$x(t)$ the value of $x$ at coordinate $t$. 
Given $p\in (0, \infty)$, the \emph{$\ell_p$-norm} of $\vec{x}$,
denoted by~$\normp{p}{\vec{x}}$, is given by

$$\normp{p}{\vec x} = \left(\sum_{t\in T} |x(t)|^p\right)^{1/p}.$$

Moreover, for a sequence of vectors $\vec{x_1}, \ldots, \vec{x_s}$, each indexed by $T$, 
their pointwise product $\vec{y}$, i.e., the vector in which $y(t) = \prod_{k=1}^{s}x_k(t)$ for \(t\in T\),
is denoted by $\prod_{k=1}^{s} \vec{x}_k$.

Let $G$ be a graph and let $\F$ be a family of oriented graphs. 
Given a subgraph $H$ of $G$ and an $\F$-free orientation $\vec{H}$ of $H$, we denote by $c_{\F}(G \mid \vec{H})$ the number of ways to
orient the edges in $E(G)\setminus E(H)$ in order to extend $\vec{H}$ to an
$\F$-free orientation of~$G$.
For simplicity, given $v\in V(G)\setminus V(H)$, if $H$ is an induced subgraph of $G$, then we write $c_{\F}(v,\vec{H})$ for $c_{\F}(G[V(H)\cup\{v\}], \vec{H})$.
Similarly, given an edge $\{u,v\}$ of $E(G)\setminus E(H)$, we use $c_{\F}(\{u,v\},\vec{H})$ for the number of ways to orient the edges between $V(H)$ and $\{u,v\}$, and the edge $\{u,v\}$ (again avoiding $\F$),
i.e., the edges in \(E(G[V(H)\cup\{x,y\}])\setminus E(H)\).

We also define  $\vec{v}_{H,\F}$ as the vector indexed by the set $\mathcal{D}(H,\F)$ of all $\F$-free orientations of $H$, whose coordinate corresponding to an orientation $\vec{H}$ is given by $\vec{v}_{H, \F}(\vec{H}) = c_{\F}(v,\vec{H})$.
When \(\F\) is a family of tournaments, we obtain the following simple observation.
\begin{fact}\label{prop:indepset}
Let $\F$ be a family of tournaments.
If $H$ is an induced subgraph of $G$ such that $S = V(G)\setminus V(H)$ is an independent set in $G$, and $\vec{H}$ is an $\F$-free orientation of $H$, then
\[
 	c_{\F}(G\mid \vec{H}) = \prod_{v\in S} c_{\F}(v,\vec{H}).
 \] 
\end{fact}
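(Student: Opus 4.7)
The plan is to exploit two structural facts: every member of $\F$ is a tournament (hence its underlying graph is complete), and $S = V(G)\setminus V(H)$ is an independent set of $G$. Together these force any forbidden configuration in an extension of $\vec{H}$ to involve at most one vertex of $S$, which will decouple the constraints on the edges incident to different vertices of $S$.

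First I would decompose $E(G)\setminus E(H)$. Since $H$ is induced and $S$ is independent, the only edges of $G$ not in $E(H)$ are those joining $V(H)$ to $S$. Writing $E_v$ for the edges between $v\in S$ and $V(H)$, we obtain a disjoint partition $E(G)\setminus E(H) = \bigcup_{v\in S} E_v$, so an extension of $\vec{H}$ to an orientation $\vec{G}$ of $G$ amounts to an independent choice of orientation of each $E_v$.

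Next I would apply the key observation: any copy of some $\vec{F}\in\F$ in such an orientation $\vec{G}$ must be supported on a clique of $G$, and since $S$ is independent no clique of $G$ contains two vertices of $S$. Consequently, every potential forbidden copy is entirely contained in $G[V(H)\cup\{v\}]$ for some unique $v\in S$, and whether it appears depends only on $\vec{H}$ and on the orientation of $E_v$. Thus the condition that the extension of $\vec{H}$ be $\F$-free is the conjunction, over $v\in S$, of the independent conditions that $\vec{H}$ together with the chosen orientation of $E_v$ yields an $\F$-free orientation of $G[V(H)\cup\{v\}]$. By definition the number of good orientations of $E_v$ for each such condition is $c_{\F}(v,\vec{H})$, and multiplicativity of independent choices yields $c_{\F}(G\mid\vec{H}) = \prod_{v\in S} c_{\F}(v,\vec{H})$.

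Honestly there is no substantial obstacle here: once one notices that cliques meet the independent set $S$ in at most one vertex, the identity is a mere restatement of the independence of the per-vertex choices. The only mild point to verify explicitly is that a forbidden copy cannot mix two vertices of $S$ with vertices of $V(H)$, which is exactly where the tournament hypothesis on $\F$ is used; this is why the statement is phrased for families of tournaments rather than arbitrary oriented graphs.
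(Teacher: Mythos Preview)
Your argument is correct: the key point is exactly that a tournament has a complete underlying graph, so any copy of some $\vec F\in\F$ in an extension of $\vec H$ meets the independent set $S$ in at most one vertex, which decouples the constraints across the sets $E_v$ and makes the count multiplicative. The paper does not actually give a proof of this fact; it is stated as a ``simple observation'' and left to the reader, so your write-up supplies precisely the justification the paper omits.
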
 
We shall also use the following consequence of H\"{o}lder's inequality.
\begin{lemma}\label{cor:holder}
Let $\vec{x}_1, \ldots, \vec{x}_s$ be complex-valued vectors indexed by a set $T$. Then,
$$\normp{1}{\prod_{k=1}^{s} \vec{x}_k} \le \prod_{k=1}^{s}\normp{s}{\vec{x}_k}.$$
Furthermore, equality holds if and only if, for every $i,j \in [s]$, there exists $\alpha_{i,j}$ with the property that $x_i(t)=\alpha_{i,j} x_j(t)$ for all $t \in T$.
\end{lemma}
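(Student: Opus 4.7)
The plan is to derive this as an instance of the generalized (Rogers--)Hölder inequality with all $s$ exponents equal to $s$, so that $\sum_{k=1}^{s} 1/s = 1$. Because both sides of the claimed inequality depend only on the moduli $a_k(t) := |x_k(t)|$, I replace each $\vec x_k$ by the nonnegative real vector $\vec a_k$ from the outset, and then proceed by induction on $s$. The base case $s=1$ is trivial.

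For the inductive step, assume the claim for $s-1$ and set $\vec y := \prod_{k=1}^{s-1}\vec a_k$. Applying the classical Hölder inequality to the pair $\vec y, \vec a_s$ with conjugate exponents $s/(s-1)$ and $s$ yields
\[
\sum_{t\in T} y(t)\, a_s(t) \le \normp{s/(s-1)}{\vec y}\cdot \normp{s}{\vec a_s}.
\]
To control $\normp{s/(s-1)}{\vec y}$, observe that $\normp{s/(s-1)}{\vec y}^{s/(s-1)} = \sum_t \prod_{k=1}^{s-1} a_k(t)^{s/(s-1)}$, which is the $\ell_1$-norm of $\prod_{k=1}^{s-1}\vec w_k$ with $w_k(t) := a_k(t)^{s/(s-1)}$. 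Applying the induction hypothesis to $\vec w_1,\ldots,\vec w_{s-1}$ with parameter $s-1$ gives
\[
\sum_{t} \prod_{k=1}^{s-1} w_k(t) \le \prod_{k=1}^{s-1}\Bigl(\sum_{t} w_k(t)^{s-1}\Bigr)^{1/(s-1)} = \prod_{k=1}^{s-1}\Bigl(\sum_{t} a_k(t)^s\Bigr)^{1/(s-1)}.
\]
Taking the $(s-1)/s$-th power of this bound and substituting into the Hölder estimate produces the desired inequality $\normp{1}{\prod_k \vec x_k} \le \prod_k \normp{s}{\vec x_k}$.

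For the equality characterization, each use of the classical two-variable Hölder inequality in the chain is tight if and only if the two vectors involved (raised to the relevant powers) are proportional; unwinding these conditions through the induction forces $|x_1|^s,\ldots,|x_s|^s$ to be pairwise proportional as nonnegative functions on $T$, which is exactly the stated scalar-multiple relation (with $\alpha_{i,j}$ a nonnegative real in the intended applications, where the vectors $\vec v_{H,\F}$ have nonnegative entries). I do not anticipate a substantial obstacle: the main care required is in tracking the equality condition cleanly through the induction and in handling the degenerate coordinates on which some $\vec x_k$ vanishes, which can be dispatched by first restricting to the common support of the $\vec x_k$ (off the common support both sides contribute zero).
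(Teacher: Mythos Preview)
The paper does not supply a proof of this lemma; it simply introduces it as ``the following consequence of H\"older's inequality'' and uses it as a black box. Your inductive derivation from the two-variable H\"older inequality is the standard argument for the generalized H\"older inequality with exponents $(s,\ldots,s)$, and it is correct.

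One remark on the equality clause, which you already half-addressed: as literally stated in the paper for \emph{complex} vectors, the ``if and only if'' is false (e.g.\ $s=2$, $T=\{1,2\}$, $\vec x_1=(1,1)$, $\vec x_2=(1,i)$ gives equality without $\vec x_1$ being a scalar multiple of $\vec x_2$). Your argument correctly yields that equality forces the modulus vectors $|\vec x_1|,\ldots,|\vec x_s|$ to be pairwise proportional, and you are right that this is all that is needed (and all that is true) in the paper's applications, where every $\vec v_{H,\F}$ has nonnegative real entries. It would be worth stating this explicitly rather than leaving it in a parenthetical.
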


We say that two non-adjacent vertices of a graph are \emph{twins} if they have the same neighborhood.
For the next lemma we consider the following operation: take an
independent set $S$ of a graph $G$, select a particular vertex $v\in
S$, delete all vertices in $S\setminus\{v\}$ and add $|S|-1$ new twins
of $v$.
We show that there is a vertex $v \in S$ for which this operation produces a graph that has at least as many $\F$-free orientations as $G$.

\begin{lemma}\label{lemma:replaceIndSet}
Let $\F$ be a family of tournaments, and let $G$ be a graph on $n$ vertices.
If $S \subset V(G)$ is a non-empty independent set, then the following holds.
\begin{enumerate}[(a)]
\item \label{lemma:replaceIndSeta}
if \(v\) is a vertex in \(S\) for which
$\normp{|S|}{\vec{v}_{G-S,\F}}$ is maximum among all vertices of $S$,
then the graph \(\widetilde{G}\) 
obtained from \(G\) by replacing the vertices in \(S\setminus\{v\}\) with \(|S|-1\) twins of \(v\) is such that $D(\widetilde{G},\F)\geq D(G,\F)$; and
\item \label{lemma:replaceIndSetb}
if $G$ is $\F$-extremal, then $\vec{u}_{G-S,\F} = \vec{w}_{G-S,\F}$ for any vertices $u, w \in S$.
\end{enumerate}
\end{lemma}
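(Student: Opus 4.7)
Set $H := G[V(G)\setminus S]$, which is an induced subgraph of both $G$ and of the graph $\widetilde{G}$ obtained by replacing $S\setminus\{v\}$ with $|S|-1$ twins of $v$. The strategy is to write $D(G,\F)$ as an $\ell_1$-norm of a pointwise product of vectors, apply H\"older, and compare with the analogous expression for $D(\widetilde{G},\F)$.

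First I would observe that, since $S$ is an independent set in $G$ and $\F$ is a family of tournaments, Fact~\ref{prop:indepset} yields
$$D(G,\F) \;=\; \sum_{\vec H\in\mathcal D(H,\F)}c_{\F}(G\mid \vec H) \;=\; \sum_{\vec H\in\mathcal D(H,\F)}\prod_{u\in S} \vec u_{H,\F}(\vec H) \;=\; \normp{1}{\prod_{u\in S}\vec u_{H,\F}}.$$
Applying Lemma~\ref{cor:holder} with $s=|S|$, and then using that $v\in S$ maximizes $\normp{|S|}{\vec u_{H,\F}}$ over $u\in S$, gives
$$D(G,\F) \;\le\; \prod_{u\in S}\normp{|S|}{\vec u_{H,\F}} \;\le\; \normp{|S|}{\vec v_{H,\F}}^{|S|}.$$
Now the key observation for the other side: every vertex $u$ of $\widetilde{S}:=(S\setminus\{v\})\cup\{v,\text{new twins}\}$ has the same neighborhood in $H$ as $v$, so $\vec u_{H,\F}=\vec v_{H,\F}$ for all $u\in\widetilde S$. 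Applying Fact~\ref{prop:indepset} to $\widetilde G$, whose vertex set is $V(H)\cup\widetilde S$ with $\widetilde S$ independent and $|\widetilde S|=|S|$, gives
$$D(\widetilde G,\F) \;=\; \sum_{\vec H\in\mathcal D(H,\F)}\prod_{u\in\widetilde S}\vec v_{H,\F}(\vec H) \;=\; \sum_{\vec H}\vec v_{H,\F}(\vec H)^{|S|} \;=\; \normp{|S|}{\vec v_{H,\F}}^{|S|}.$$
Chaining the two displays proves~(\ref{lemma:replaceIndSeta}).

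For~(\ref{lemma:replaceIndSetb}), suppose $G$ is $\F$-extremal; since $D(\widetilde G,\F)\le D(G,\F)$ by extremality and $D(\widetilde G,\F)\ge D(G,\F)$ by part~(\ref{lemma:replaceIndSeta}), both inequalities in the chain above must be equalities. The second equality forces $\normp{|S|}{\vec u_{H,\F}}=\normp{|S|}{\vec v_{H,\F}}$ for every $u\in S$. The first equality is the H\"older equality condition of Lemma~\ref{cor:holder}, which provides, for every $u,w\in S$, a constant $\alpha_{u,w}$ with $\vec u_{H,\F}=\alpha_{u,w}\vec w_{H,\F}$. Taking $\ell_{|S|}$-norms and using that all these norms are equal yields $|\alpha_{u,w}|=1$ whenever the vectors are nonzero, and since the vectors have non-negative entries we conclude $\alpha_{u,w}=1$, i.e., $\vec u_{H,\F}=\vec w_{H,\F}$. (If some $\vec u_{H,\F}$ is identically zero then all are, by equality of norms, and the conclusion is trivial.)

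The main obstacle I anticipate is bookkeeping around the equality case: verifying that both inequalities in the chain are tight simultaneously and combining the H\"older proportionality with the norm equality to rule out any nontrivial scaling. Everything else is a clean translation between counting $\F$-free extensions and $\ell_p$-norms of the vectors $\vec u_{H,\F}$.
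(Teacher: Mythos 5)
Your proof is correct and follows essentially the same route as the paper: express $D(G,\F)$ as $\normp{1}{\prod_{u\in S}\vec u_{H,\F}}$ via Fact~\ref{prop:indepset}, bound it by $\normp{|S|}{\vec v_{H,\F}}^{|S|}$ via H\"older, identify this with $D(\widetilde G,\F)$, and extract part~(b) from the equality conditions. Your handling of the proportionality constant in the equality case is slightly more explicit than the paper's, but the argument is the same.
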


\begin{proof}
Let \(\F\), \(G\), and \(S\) be as in the statement, and put \(H=G-S\)
and $s = |S|$.
By Fact~\ref{prop:indepset}, the total number of $\F$-free orientations of $G$ is given by
\begin{equation} \label{eq:primeira}
D(G,\F) = \sum_{\vec{H}\in D(H,\F)} c_{\F}(G \mid \vec{H}) = \sum_{\vec{H}\in D(H,\F)} \prod_{u\in S} c_{\F}(u, \vec{H})= \normp{1}{\prod_{u\in S} \vec{u}_{H,\F}},
\end{equation}
where, in the last equality, we used the fact that every coordinate of $\vec{u}_{H,\F}$ is non-negative.

Let $v$ be a vertex in $S$ for which $\normp{s}{\vec{v}_{H,\F}}$ is maximum. 
By Lemma~\ref{cor:holder}, we have

\begin{equation} \label{eq:usingholder}
\normp{1}{\prod_{u\in S} \vec{u}_{H,\F}} \le \prod_{u \in S}\normp{s}{\vec{u}_{H,\F}} 
\le \normp{s}{\vec{v}_{H,\F}}^s.
\end{equation}
On the other hand, let  \(\widetilde{G}\) be the graph
obtained from \(G\) by replacing the vertices in \(S\setminus\{v\}\) by \(s-1\) twins of \(v\).
Then, we have:
\begin{equation} \label{eq:terceira}
D(\widetilde{G},\F) = \sum_{\vec{H}\in D(H,\F)} c_{\F}(v, \vec{H})^s =
\normp{s}{\vec{v}_{H,\F}}^s.
\end{equation}
Therefore, combining~\eqref{eq:primeira},~\eqref{eq:usingholder}
and~\eqref{eq:terceira}, we have $D(\widetilde{G},\F) \ge D(G,\F)$. This proves~\eqref{lemma:replaceIndSeta}.

Now, assume $G$ is $\F$-extremal. 
Since $D(\widetilde{G},\F) \ge D(G,\F)$, we have $D(\widetilde{G},\F) = D(G,\F)$. 
This implies that both inequalities in \eqref{eq:usingholder} hold with equality,
and hence for every vertex $u \in S$, we must have $\normp{s}{\vec{u}_{H,\F}} = \normp{s}{\vec{v}_{H,\F}}$. 
By the equality conditions of Lemma \ref{cor:holder}, together with the fact that all coordinates in our vectors are nonnegative, we have $\vec{u}_{H,\F} = \vec{v}_{H,\F}$.
This proves~\eqref{lemma:replaceIndSetb}.
\end{proof}

\begin{corollary}\label{corollary:cloning}
Let $\F$ be a family of tournaments, and let $G$ be an $\F$-extremal graph.
If $u, v\in V(G)$ are non-adjacent, 
then the graph obtained from \(G\) by replacing $v$ with a twin of $u$ is also $\F$-extremal.
\end{corollary}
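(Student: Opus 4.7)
The plan is to apply Lemma~\ref{lemma:replaceIndSet} directly with the two-element independent set $S=\{u,v\}$, so that $s=|S|=2$. Since $u$ and $v$ are non-adjacent by hypothesis, $S$ is indeed an independent set in $G$, and we may put $H = G-S$ so that $\vec{u}_{H,\F}$ and $\vec{v}_{H,\F}$ are both well-defined vectors indexed by $\mathcal{D}(H,\F)$.

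Next I would invoke part~\eqref{lemma:replaceIndSetb} of Lemma~\ref{lemma:replaceIndSet}, which applies because $G$ is $\F$-extremal: it yields $\vec{u}_{H,\F} = \vec{v}_{H,\F}$, and in particular the two vectors have the same $\ell_2$-norm. Therefore $u$ is among the vertices of $S$ that maximize $\normp{2}{\vec{\cdot}_{H,\F}}$, which is exactly the hypothesis required to apply part~\eqref{lemma:replaceIndSeta} with $u$ in the role of the chosen vertex. The construction in~\eqref{lemma:replaceIndSeta} then tells us to delete the remaining vertex of $S$, namely $v$, and add $|S|-1=1$ new twin of $u$; the resulting graph is precisely the graph $G'$ described in the statement of the corollary, and the conclusion of~\eqref{lemma:replaceIndSeta} gives $D(G',\F) \geq D(G,\F)$.

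To close, note that $G'$ has the same number of vertices as $G$. Since $G$ is $\F$-extremal, $D(G,\F) = D(n,\F)$, so the inequality $D(G',\F) \geq D(G,\F)$ forces equality, and hence $G'$ is itself $\F$-extremal. There is no real obstacle here: both the algebraic manipulations (H\"older, pointwise equality of the vectors) and the combinatorial construction (replacement by twins) have already been carried out inside Lemma~\ref{lemma:replaceIndSet}, so the corollary is essentially the $|S|=2$ specialization combined with its own extremality hypothesis.
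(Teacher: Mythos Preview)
Your proposal is correct and follows essentially the same approach as the paper: apply Lemma~\ref{lemma:replaceIndSet} with $S=\{u,v\}$, use part~\eqref{lemma:replaceIndSetb} and extremality to get $\vec{u}_{H,\F}=\vec{v}_{H,\F}$, then invoke part~\eqref{lemma:replaceIndSeta} with $u$ as the distinguished vertex and close via extremality. If anything, your write-up is slightly more explicit than the paper's about why $u$ qualifies as a norm-maximizer and about turning the inequality $D(G',\F)\geq D(G,\F)$ into equality.
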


\begin{proof}
Let $G_{uv}=G-\{u,v\}$. Since $G$ is $\F$-extremal, by Lemma~\ref{lemma:replaceIndSet}\eqref{lemma:replaceIndSetb} with $S = \{u, v\}$,
 we have $\vec{u}_{G_{uv}, \F} = \vec{v}_{G_{uv},\F}$.
 Therefore, by Lemma~\ref{lemma:replaceIndSet}\eqref{lemma:replaceIndSeta},
 replacing $v$ with a twin of $u$ does not change the number of $\F$-free orientations of the graph.
\end{proof}

Note that a graph $G$ is a complete multipartite graph if and only if, for any $u,v,w \in V(G)$ such that $\{u,v\}, \{u,w\} \notin E(G)$, we have $\{v,w\} \notin E(G)$.
By repeatedly applying Corollary~\ref{corollary:cloning}, we show that \emph{there exists} a complete multipartite graph on $n$ vertices that is $\F$-extremal.
\begin{lemma}\label{thm_existence}
Let $\F$ be a family of tournaments and let $G$ be an $n$-vertex $\F$-extremal graph.
Then there exists an $n$-vertex complete multipartite graph $G^\ast$ that is $\F$-extremal and satisfies $|E(G^*)| \geq |E(G)|$.
\end{lemma}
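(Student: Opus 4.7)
The plan is a lexicographic extremality argument powered entirely by Corollary~\ref{corollary:cloning}. Among all $n$-vertex $\F$-extremal graphs I would pick $G^*$ that first maximizes $|E(H)|$ and, subject to this, maximizes the secondary invariant $\Phi(H):=\sum_{v\in V(H)}\deg_H(v)^2$. Since there are only finitely many labeled graphs on $n$ vertices, such a $G^*$ exists, and by construction $|E(G^*)|\geq |E(G)|$. It then suffices to show that $G^*$ is complete multipartite.

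The characterization I would exploit is that a graph is complete multipartite if and only if every non-adjacent pair in it is a twin pair; this is immediately equivalent to the condition stated in the paragraph preceding Lemma~\ref{thm_existence}. Assume, for contradiction, that $G^*$ contains non-adjacent $u,v$ with $N_{G^*}(u)\neq N_{G^*}(v)$, and let $G_1$ be obtained from $G^*$ by replacing $v$ with a twin of $u$, and $G_2$ by replacing $u$ with a twin of $v$. Corollary~\ref{corollary:cloning} gives that both $G_1$ and $G_2$ are $\F$-extremal, and a direct count gives $|E(G_1)|-|E(G^*)|=\deg_{G^*}(u)-\deg_{G^*}(v)$ and $|E(G_2)|-|E(G^*)|=\deg_{G^*}(v)-\deg_{G^*}(u)$. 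Edge-maximality of $G^*$ forces both differences to be nonpositive, so $\deg_{G^*}(u)=\deg_{G^*}(v)$ and $|E(G_1)|=|E(G_2)|=|E(G^*)|$.

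To finish I would track $\Phi$. Under the equal-degree condition, the degrees of $u$ and $v$ are preserved by both operations; and for $x\neq u,v$ the degree of $x$ shifts by $\delta_x:=\mathbf{1}_{x\in N_{G^*}(u)}-\mathbf{1}_{x\in N_{G^*}(v)}$ when passing to $G_1$ and by $-\delta_x$ when passing to $G_2$. Expanding $(d_x+\delta_x)^2-d_x^2$ in each case and summing the two $\Phi$-changes, the linear cross-terms $2d_x\delta_x$ cancel between $G_1$ and $G_2$, leaving $(\Phi(G_1)-\Phi(G^*))+(\Phi(G_2)-\Phi(G^*))=2\sum_{x\neq u,v}\delta_x^2=2\bigl|N_{G^*}(u)\triangle N_{G^*}(v)\bigr|$. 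Because $u,v$ are non-twin vertices of equal degree, the symmetric difference has size at least $2$, so the right-hand side is strictly positive and at least one of $\Phi(G_1),\Phi(G_2)$ strictly exceeds $\Phi(G^*)$, contradicting the secondary-maximal choice of $G^*$.

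The step I expect to be the main obstacle is the design of the tie-breaking invariant: maximizing $|E|$ alone is not enough, because cloning between non-twin vertices of equal degree leaves the edge count fixed and gives no contradiction. The choice $\Phi=\sum\deg^2$ works precisely because averaging over the two symmetric cloning moves cancels the linear cross-term $\sum d_x\delta_x$ and isolates a pure sum of squares, which is forced to be strictly positive exactly in the equal-degree/distinct-neighborhood regime where the primary invariant stalls.
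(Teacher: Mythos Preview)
Your argument is correct, and it takes a genuinely different route from the paper. The paper proceeds \emph{constructively}: starting from $G_0=G$, it repeatedly picks an eccentric vertex $u$ of maximum degree and replaces all non-neighbours of $u$ by twins of $u$; each step is licensed by Corollary~\ref{corollary:cloning}, the maximum-degree choice keeps the edge count from dropping, and a short argument shows the set of eccentric vertices strictly shrinks, so the process halts at a complete multipartite $G_t$. Your approach is instead a clean \emph{extremal-choice} argument: you jump straight to a lexicographically maximal $\F$-extremal graph (edges first, then $\Phi=\sum\deg^2$) and derive a contradiction from a non-twin non-adjacent pair, using the symmetric pair of cloning moves to cancel the linear cross-terms and isolate $2|N(u)\triangle N(v)|>0$. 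What the paper's approach buys is an explicit algorithm transforming the given $G$ into $G^\ast$ and a proof that needs no auxiliary invariant; what your approach buys is a shorter, self-contained contradiction with no termination bookkeeping, at the price of having to invent the right tie-breaker $\Phi$.
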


\begin{proof}
  Let $G$ be an $n$-vertex $\F$-extremal graph.
  A vertex \(u\) is \emph{eccentric} if there is a non-neighbor of
  $u$ that is not its twin.
  Put $G_0 = G$ and let \(G_0,\ldots,G_t\) be a maximal sequence of graphs such that,
    for \(i=0,\ldots,t-1\), the graph \(G_{i+1}\) is obtained from \(G_i\) 
    by picking an eccentric vertex \(u\) of maximum degree in $G_i$
	and replacing \textit{every} non-neighbor of \(u\) with a twin of \(u\).
        Thus, every vertex of \(G_{i+1}\) is either a neighbor or a twin of~\(u\).
        Note that in $G_{i+1}$ the vertex $u$ is not eccentric.
	Moreover, if a vertex \(v\) is eccentric in
        \(G_{i+1}\), then \(v\) is also eccentric in \(G_i\),
	and hence \(G_{i+1}\) contains fewer eccentric vertices than
        \(G_i\), from which we conclude that the sequence
        \(G_0,\ldots,G_t\) is finite.

        By construction, since we always pick a vertex $u$ with
        maximum degree,  \(|E(G_t)|\geq |E(G)|\).
        Then, by Corollary~\ref{corollary:cloning}, \(G_t\) is \(\F\)-extremal.
	Moreover, every pair of non-adjacent vertices \(u\) and \(v\) in \(G_t\) are twins.
	Therefore, \(G_t\) is a multipartite complete graph. 	
\end{proof}

The following result implies that any $\F$-extremal graph may also be turned into an $\F$-extremal multipartite graph by removing edges. 

\begin{lemma} \label{lemma:containsmultipartite}
Let $\F$ be a family of tournaments, let $G$ be an $\F$-extremal graph,
and let \(u,v,w\) be distinct vertices of \(G\) such that $\{u,v\}, \{u,w\} \notin E(G)$ and $\{v,w\} \in E(G)$.
Then, the graph obtained from \(G\) by deleting the edge $\{v,w\}$ is $\F$-extremal.
Furthermore, for \(H=G-\{u,v,w\}\) we have $\vec{u}_{H,\F} = \vec{w}_{H,\F} = \vec{v}_{H,\F}$.
\end{lemma}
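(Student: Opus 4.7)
The plan is to first establish the ``furthermore'' assertion (the vector equality) by passing to auxiliary extremal graphs in which $\{u,v,w\}$ is an independent set, and then deduce the extremality of $G'$, the graph obtained from $G$ by deleting the edge $\{v,w\}$, via a parallel computation using Fact~\ref{prop:indepset}.

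Let $G_1$ be the graph obtained from $G$ by replacing $v$ with a twin of $u$, and let $G_2$ be the graph obtained from $G$ by replacing $w$ with a twin of $u$; both operations are legitimate because $u$ is non-adjacent to both $v$ and $w$. By Corollary~\ref{corollary:cloning}, $G_1$ and $G_2$ are $\F$-extremal. In $G_1$ the vertex $v$ has neighborhood $N_G(u)$, which excludes $w$, so $\{u,v,w\}$ is an independent set in $G_1$; analogously in $G_2$. Since cloning alters only edges incident to the replaced vertex, $G_1-\{u,v,w\} = G_2-\{u,v,w\} = H$ as graphs. Applying Lemma~\ref{lemma:replaceIndSet}\eqref{lemma:replaceIndSetb} to $G_1$ with $S=\{u,v,w\}$ shows that the three vectors $\vec u_{H,\F}$, $\vec v_{H,\F}$, $\vec w_{H,\F}$ computed with respect to $G_1$ are pairwise equal. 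Inspecting neighborhoods: the $G_1$-vectors of $u$ and $w$ coincide with their $G$-versions ($u$'s neighborhood is unchanged, and $w$'s differs from that in $G$ only by $v$, which lies outside $V(H)$); and the $G_1$-vector of $v$ equals the $G$-vector of $u$, via the isomorphism that swaps $u$ and $v$ and fixes $V(H)$. Hence the $G_1$ equality collapses to $\vec u_{H,\F}=\vec w_{H,\F}$ in $G$, and the analogous argument on $G_2$ yields $\vec u_{H,\F}=\vec v_{H,\F}$ in $G$, establishing the ``furthermore'' part.

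For the extremality of $G'$: since $\{u,v,w\}$ is independent in both $G'$ and $G_1$, Fact~\ref{prop:indepset} yields
\[
D(G',\F) = \sum_{\vec H \in \mathcal{D}(H,\F)} c_\F(u,\vec H)\, c_\F(v,\vec H)\, c_\F(w,\vec H)
\]
(the values are computed in $G'$, but coincide with those in $G$ since the deleted edge is absent from each subgraph $G'[V(H)\cup\{x\}]$ for $x\in\{u,v,w\}$) and
\[
D(G_1,\F) = \sum_{\vec H \in \mathcal{D}(H,\F)} c_\F(u,\vec H)\, c_\F(u,\vec H)\, c_\F(w,\vec H),
\]
where the same neighborhood translation rewrites the $G_1$-values as $G$-values. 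The vector equality just proved reduces both sums to $\sum_{\vec H} c_\F(u,\vec H)^3$, so $D(G',\F)=D(G_1,\F)=D(n,\F)$, and $G'$ is $\F$-extremal. The main subtlety throughout is this careful bookkeeping of induced subgraphs (and hence of $c_\F$-values) across $G$, $G_1$, and $G_2$, since Lemma~\ref{lemma:replaceIndSet}\eqref{lemma:replaceIndSetb} provides equalities only in the modified graphs, which must then be correctly transferred back to $G$.
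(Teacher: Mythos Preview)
Your proof is correct, and it takes a genuinely different route from the paper's.

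The paper works directly with $G$: it builds $G^u_{\twins}$ (three copies of $u$ added to $H$), observes $D(G^u_{\twins},\F)=\normp{3}{\vec u_{H,\F}}^3$, and then computes $D(G,\F)$ itself by summing over $\F$-free orientations $\vec H$ of $H$ and then over extensions $\vec{H^w}$, using Lemma~\ref{lemma:replaceIndSet}\eqref{lemma:replaceIndSetb} with $S=\{u,v\}$ and the fact that $c_\F(u,\vec{H^w})=c_\F(u,\vec H)$ (since $u,w$ are non-adjacent and $\F$ consists of tournaments). This yields $D(G,\F)=\sum_{\vec H} c_\F(u,\vec H)^2 c_\F(w,\vec H)$, and a fresh application of H\"older (Lemma~\ref{cor:holder}) together with its equality condition forces $\vec u_{H,\F}=\vec w_{H,\F}$; the case of $v$ is symmetric.

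You instead push the cloning idea one step further: by replacing $v$ (respectively $w$) with a twin of $u$, you land in extremal graphs $G_1$, $G_2$ in which $\{u,v,w\}$ is already independent, so Lemma~\ref{lemma:replaceIndSet}\eqref{lemma:replaceIndSetb} can be invoked with $S=\{u,v,w\}$ directly. Your careful translation of the vectors between $G$, $G_1$, $G_2$ (the only delicate point, which you handle correctly: neighborhoods restricted to $V(H)$ are unchanged for the non-cloned vertices, and the cloned vertex inherits $u$'s vector) yields the desired equalities without recomputing $D(G,\F)$ or re-invoking H\"older. The extremality of $G'$ then follows exactly as in the paper, via Fact~\ref{prop:indepset}.

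What each approach buys: the paper's argument is self-contained and exhibits the H\"older mechanism explicitly at the level of $G$; yours is shorter and more modular, delegating all the analytic work to the already-proved Lemma~\ref{lemma:replaceIndSet}, at the cost of the bookkeeping across three ambient graphs.
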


\begin{proof}
Let \(\F\), $G$, \(u\), \(v\), and \(w\) be as in the statement. 
Let $H = G - \{u, v, w\}$, and write $H^x = G[V(H)\cup x]$ for $x \in
\{u,v,w\}$ and let $G^u_{\twins}$ ($G^w_{\twins}$) be the graph obtained
from $H^u$ by adding twins $u_1$, $u_2$ of $u$ (twins $w_1$, $w_2$ of $w$). 
By Corollary \ref{corollary:cloning} applied twice, the graph $G^u_{\twins}$ is also $\F$-extremal.
Therefore, $ D(G,\F) = D(G^u_{\twins},\F)$.

Applying Proposition \ref{prop:indepset} to $G^u_{\twins}$ with $S = \{u,u_1,u_2\}$, we have

\begin{equation*}
D(G^u_{\twins},\F)= \sum_{\vec{H}\in D(H,\F)} c_{\F}(G^u_{\twins} \mid \vec{H}) = \sum_{\vec{H}\in D(H,\F)} c_{\F}(u,\vec{H})^3 = \normp{3}{\vec{u}_{H,\F}}^3.
\end{equation*}

By an analogous computation, we have $D(G^w_{\twins},\F)=\normp{3}{\vec{w}_{H,\F}}^3$. 
However, since $w$ is neighbor of $u$ and $v$,
Corollary~\ref{corollary:cloning} cannot be applied to the graph
$G^w_{\twins}$, so it is not possible to conclude whether $G^w_{\twins}$ is $\F$-extremal.
But since $G^u_{\twins}$ is $\F$-extremal, we have

\begin{equation}
\label{eq:normUvsW}
\normp{3}{\vec{w}_{H,\F}}^3 = D(G^w_{\twins},\F) \le D(G^u_{\twins},\F)=\normp{3}{\vec{u}_{H,\F}}^3.
\end{equation}

Since there are no edges between $u$ and $\{v, w\}$, we can compute $D(G,\F)$ as follows:

\begin{align*}
D(G,\F) = \sum_{\vec{H}\in D(H,\F)}\left( c_{\F}(u,\vec{H})  c_{\F}(G - u \mid \vec{H})\right)
        = \sum_{\vec{H}\in D(H,\F)}\left( c_{\F}(u,\vec{H})  \sum_{\vec{H^w} \mid \vec{H}} c_{\F}(v, \vec{H^w})\right),
\end{align*}
where the inner sum is taken over the $\F$-free orientations of $H^w$
that extend a given $\F$-free orientation of $H$, that is, over the
orientations of the edges between $w$ and $H$, for which the resulting
orientation is $\F$-free. By Lemma~\ref{lemma:replaceIndSet}(b), since
$G$ is $\F$-extremal and $\{u,v\} \notin E(G)$, we have
$\vec{v}_{H^{w}, \F} = \vec{u}_{H^{w}, \F}$, i.e.,
$c_{\F}(v,\vec{H^w}) = c_{\F}(u,\vec{H^w})$ for every
$\vec{H^w}$. Finally, note that since $\F$ is a family of tournaments
and $u$ and $w$ are not neighbors, $c_{\F}(u,\vec{H^w})$ does not depend on the orientation of the edges between $w$ and $H$, so $c_{\F}(u,\vec{H^w}) = c_{\F}(u,\vec{H})$.
Therefore,

\begin{align}	 
D(G,\F) &= \sum_{\vec{H}\in D(H,\F)}\left( c_{\F}(u,\vec{H}) 
          \sum_{\vec{H^w} \mid \vec{H}} c_{\F}(u, \vec{H})
          \right)
         = \sum_{\vec{H}\in D(H,\F)}\left( c_{\F}(u,\vec{H}) 
           c_{\F}(u,\vec{H})  \sum_{\vec{H^w} \mid \vec{H}} 1 \right) \nonumber \\
	 &= \sum_{\vec{H}\in D(H,\F)}
           c_{\F}(u,\vec{H})^2c_{\F}(w,\vec{H}) \nonumber \\
	 &\le \normp{3}{\vec{u}_{H,\F}} \normp{3}{\vec{u}_{H,\F}} \normp{3}{\vec{w}_{H,\F}} \label{eq:usingHolderB}\\
	 &\le \normp{3}{\vec{u}_{H,\F}}^3. \label{eq:usingHolderC}
\end{align}

Note that~\eqref{eq:usingHolderB} follows by~Lemma~\ref{cor:holder}, 
and \eqref{eq:usingHolderC} follows from~\eqref{eq:normUvsW}. Finally, 
since $D(G,\F) = D(G^u_{\twins},\F) = \normp{3}{\vec{u}_{H,\F}}^3$, we must
have equality in both \eqref{eq:usingHolderB} and
\eqref{eq:usingHolderC}, which in turn leads to
$\normp{3}{\vec{u}_{H,\F}} = \normp{3}{\vec{w}_{H,\F}}$. The equality
condition in Lemma~\ref{cor:holder} implies that $\vec{u}_{H,\F} =
\vec{w}_{H,\F}$. Analogously, $\vec{u}_{H,\F} = \vec{v}_{H,\F}$.

Finally, let $G^-$ be the graph obtained from $G$ by deleting the edge
$\{v,w\}$.
It follows that

$$D(G^u_{\twins},\F)= \sum_{\vec{H}} c_{\F}(u,\vec{H})^3 =\sum_{\vec{H}} c_{\F}(u,\vec{H}) c_{\F}(v,\vec{H}) c_{\F}(w,\vec{H}) = D(G^-,\F).$$

Therefore, $G^-$ is $\F$-extremal. This concludes the proof.
\end{proof}

An interesting consequence of the main results of this section (Lemmas~\ref{thm_existence} and~\ref{lemma:containsmultipartite}) is that
if $G$ is an $\F$-extremal graph that is not complete multipartite,
then one can construct $\F$-extremal complete multipartite graphs $G_1$ and \(G_2\) such that $|E(G_2)| < |E(G)|\leq |E(G_1)|$.

We finish this section proving that for some families $\F$ of
forbidden tournaments \emph{every} $\F$-extremal graph is a
multipartite complete graph.
Given an oriented graph $\vec G$, we write $(v,w)$ to denote an edge oriented from $v$ to $w$ in $\vec G$.

\begin{lemma}\label{lemma:multi}
Let $\F$ be a family of tournaments with no source and let $n$ be a positive integer.
Then, every $n$-vertex $\F$-extremal graph is complete multipartite.
\end{lemma}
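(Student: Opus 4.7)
The plan is to proceed by contradiction. Suppose $G$ is an $\F$-extremal $n$-vertex graph that is not complete multipartite. Then there exist vertices $u,v,w\in V(G)$ with $\{u,v\},\{u,w\}\notin E(G)$ and $\{v,w\}\in E(G)$. Writing $H=G-\{u,v,w\}$ and $G^{-}=G-\{v,w\}$, Lemma~\ref{lemma:containsmultipartite} gives that $G^{-}$ is $\F$-extremal with $D(G,\F)=D(G^{-},\F)$ and that $\vec{u}_{H,\F}=\vec{v}_{H,\F}=\vec{w}_{H,\F}$. The strategy is to use the no-source hypothesis to prove the strict inequality $D(G,\F)>D(G^{-},\F)$, contradicting the equality above.

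A direct computation using Fact~\ref{prop:indepset} (applied to the independent set $\{u\}$ in $G$, leveraging that $u$ is non-adjacent to $v$ and $w$, and to $\{u,v,w\}$ in $G^{-}$), combined with the vector equality above, yields
\[
D(G,\F)-D(G^{-},\F)=\sum_{\vec H\in\mathcal{D}(H,\F)}c_{\F}(u,\vec H)\bigl(c_{\F}(\{v,w\},\vec H)-c_{\F}(u,\vec H)^{2}\bigr).
\]
For each $\vec H$, partition the $c_{\F}(u,\vec H)^{2}$ pairs $(\vec{E}_{v},\vec{E}_{w})$ of $\F$-free orientations of $v$'s and $w$'s edges into $V(H)$ by the number of orientations of $\{v,w\}$ that extend the pair to an $\F$-free orientation of $G[V(H)\cup\{v,w\}]$, writing $a(\vec H),b(\vec H),c(\vec H)$ for the numbers of pairs extending in $2,1,0$ ways. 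Then $c_{\F}(\{v,w\},\vec H)=2a(\vec H)+b(\vec H)$ and $a(\vec H)+b(\vec H)+c(\vec H)=c_{\F}(u,\vec H)^{2}$, so the bracketed expression equals $a(\vec H)-c(\vec H)$.

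The no-source hypothesis now enters crucially. Consider the pair $(\vec{E}_{v}^{\star},\vec{E}_{w}^{\star})$ in which every edge at $v$ and every edge at $w$ is oriented outward. Then $v$ is a source in $\vec{H}\cup\vec{E}_{v}^{\star}$ on $G[V(H)\cup\{v\}]$, and since $\F$ contains no tournament with a source, no clique containing $v$ hosts an $\F$-tournament; analogously for $w$. Orienting $\{v,w\}$ as $(v,w)$ keeps $v$ a source in $G[V(H)\cup\{v,w\}]$, while $(w,v)$ keeps $w$ a source, so both extensions are $\F$-free. This places $(\vec{E}_{v}^{\star},\vec{E}_{w}^{\star})\in a(\vec H)$, giving $a(\vec H)\geq 1$ for every $\vec H\in\mathcal{D}(H,\F)$ (and $\mathcal{D}(H,\F)$ is nonempty since any transitive orientation is $\F$-free).

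The main technical obstacle is to strengthen this pointwise bound to the strict global inequality $\sum_{\vec H}c_{\F}(u,\vec H)\bigl(a(\vec H)-c(\vec H)\bigr)>0$, since $a(\vec H)-c(\vec H)$ need not be pointwise nonnegative. The plan is to exploit the no-source hypothesis further: any pair $(\vec{E}_{v},\vec{E}_{w})\in c(\vec H)$ creates $\F$-tournaments on cliques $C_{1},C_{2}\supseteq\{v,w\}$ in which $v$ (in the $(v,w)$-tournament) and $w$ (in the $(w,v)$-tournament) must each have in-edges from $C_{i}\setminus\{v,w\}$, because $\F$-tournaments have no source. Combining these forced in-edges with the $\F$-freeness of $\vec{H}\cup\vec{E}_{v}$ and $\vec{H}\cup\vec{E}_{w}$ on the smaller sub-cliques that arise, the plan is to construct an injection from $c(\vec H)$ into $a(\vec H)\setminus\{(\vec{E}_{v}^{\star},\vec{E}_{w}^{\star})\}$, giving $a(\vec H)\geq c(\vec H)+1$ and hence the desired strict inequality, which contradicts the equality $D(G,\F)=D(G^{-},\F)$ from Lemma~\ref{lemma:containsmultipartite} and completes the proof.
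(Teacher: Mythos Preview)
Your setup is correct up through the identity
\[
D(G,\F)-D(G^{-},\F)=\sum_{\vec H\in\mathcal{D}(H,\F)}c_{\F}(u,\vec H)\bigl(a(\vec H)-c(\vec H)\bigr),
\]
and the observation that the all-outward pair $(\vec E_v^{\star},\vec E_w^{\star})$ lies in $a(\vec H)$ is valid. But the proof is not complete: the crucial injection from $c(\vec H)$ into $a(\vec H)\setminus\{(\vec E_v^{\star},\vec E_w^{\star})\}$ is only announced, not constructed. The hints you give (that $v$ and $w$ must receive in-edges from the offending cliques $C_1,C_2$ because members of $\F$ have no source) do not by themselves specify any map, let alone an injective one landing in $a(\vec H)$, and you give no argument that the pointwise inequality $a(\vec H)\ge c(\vec H)+1$ holds for \emph{every} $\F$-free $\vec H$. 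Since this is exactly the step you yourself flag as ``the main technical obstacle'', the proposal as written has a genuine gap at the decisive point.

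The paper sidesteps this difficulty entirely. Rather than comparing $D(G,\F)$ with $D(G^{-},\F)$ via a global sum and then trying to control $a(\vec H)-c(\vec H)$ termwise, it extracts from $\F$-extremality a sharper \emph{pointwise} constraint: writing $H^{w}=G[V(H)\cup\{w\}]$, one combines your vector equality $\vec u_{H,\F}=\vec v_{H,\F}$ with Lemma~\ref{lemma:replaceIndSet}\eqref{lemma:replaceIndSetb} applied to $S=\{u,v\}$ in $G$ to obtain $c_{\F}(v,\vec{H^{w}})=c_{\F}(v,\vec H)$ for \emph{every} $\F$-free $\vec{H^{w}}$ extending any $\F$-free $\vec H$. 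A contradiction then needs only one explicit violation: take $\vec H$ transitive, extend it to $\vec{H^{w}}$ by making $w$ a source, and observe that appending the arc $(w,v)$ embeds every $\F$-free extension of $\vec H$ to $H^{v}$ injectively into the $\F$-free extensions of $\vec{H^{w}}$ to $G-u$, while the extension with $v$ a source and arc $(v,w)$ is not in the image. This yields $c_{\F}(v,\vec{H^{w}})>c_{\F}(v,\vec H)$ for that single choice, which is enough. If you want to rescue your route, you would need either to actually build the injection you promise, or to prove $a(\vec H)\ge c(\vec H)$ for all $\vec H$ together with strict inequality for at least one $\vec H$ with $c_{\F}(u,\vec H)>0$; the paper's argument essentially supplies the latter for the transitive $\vec H$, and with much less work.
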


\begin{proof}
Let $n\geq 4$ be an integer and let $\F$ as in the statement.
Let $G$ be an $n$-vertex $\F$-extremal graph and assume for a contradiction that $G$ is not complete multipartite.
Fix vertices $u, v, w$ such that $\{u,v\}, \{u,w\} \notin E(G)$ and $\{v,w\} \in E(G)$.

Let $H = G - \{u, v, w\}$, and $H^x = G[V(H)\cup x]$ for $x \in \{u,v,w\}$. From Lemma~\ref{lemma:containsmultipartite}, we have $\vec{u}_{H,\F} = \vec{w}_{H,\F}  = \vec{v}_{H,\F}$, so for every orientation $\vec{H}$ of $H$ we have $c_{\F}(u,\vec{H}) = c_{\F}(w,\vec{H}) = c_{\F}(v,\vec{H})$.
Note that since $\F$ is a family of tournaments and $u$ and $w$ are not adjacent, for every extension of $\vec{H}$ to an orientation $\vec{H^w}$, we must have $c_{\F}(u,\vec{H^w}) = c_{\F}(u,\vec{H})$. Finally, since $u$ and $v$ are not adjacent, by Lemma~\ref{lemma:replaceIndSet} \eqref{lemma:replaceIndSetb}, we have $\vec{u}_{H^w,\F} = \vec{v}_{H^w,\F}$, that is, $c_{\F}(u, \vec{H^w}) = c_{\F}(v, \vec{H^w})$ for every orientation $\vec{H^w}$ of $H^w$. It follows that, for every $\F$-free extension $\vec{H^w}$ of $\vec{H}$, we must have 
   	\begin{equation} \label{eq:extension}
   	c_{\F}(v,\vec{H^w}) = c_{\F}(v,\vec{H}).
   	\end{equation}	

We will get a contradiction from this fact (which implies that this graph $G$ cannot exist). We only need to find an orientation of $\vec{H}$ and an extension of it to $H^w$, which is $\F$-free and such that~\eqref{eq:extension} does not hold. By the definition of $\F$, we may start with a transitive orientation $\vec{H}$ of $H$, which can be extended to an $\F$-free orientation $\vec{H^w}$ of $H^w$ by orienting all edges $\{x,w\}$ between $w$ and $x \in V(H)$ as $(w,x)$.  Let $\mathcal{H}(v)$ and $ \mathcal{H}^w(v)$ be the classes of all orientations that extend $\vec{H}$ to $H^v$ and $\vec{H^w}$ to $G-u$, respectively. We show that there is an injective mapping $\phi \colon \mathcal{H}(v) \rightarrow \mathcal{H}^w(v)$ that is not surjective. 

Given an orientation $\vec{H^v} \in \mathcal{H}(v)$, let $\phi(\vec{H^v})$ be the orientation of $G-u$ that extends $\vec{H^w}$ by orienting any edge $e=\{v,x\}$ between $v$ and $x \in V(H)$ the same orientation as $e$ in $\vec{H^v}$ and by assigning the orientation $(w,v)$ to $\{v,w\}$. The function $\phi$ is clearly injective. We claim that $\phi(\mathcal{H}(v)) \in  \mathcal{H}^w(v)$. To see why this is true, suppose that $\phi(\mathcal{H}(v))$ contains a tournament $\vec T$ with no source. Clearly, this copy involves both $v$ and $w$, otherwise it would also occur in $\vec{H^v}$, a contradiction. However, $w$ is a source in $\phi(\mathcal{H}(v))$, so it cannot lie in $\vec T$.

On the other hand, any transitive orientation of $G-u$ where $\{v,w\}$ is oriented $(v,w)$ and all edges $\{x,v\}$ and $\{y,w\}$ with $x,y \in V(H)$ are oriented $(v,x)$ and $(w,y)$, respectively,  must lie in $\mathcal{H}^w(v)$. However, it does not lie in $\phi(\mathcal{H}(v))$, as $\phi$ always orients $\{v,w\}$ as $(w,v)$. So $\phi$ is not surjective and we have reached the desired contradiction.
\end{proof}

\section{Avoiding strongly connected tournaments}
\label{sec:main}

We start by giving some results concerning Hamilton cycles in oriented graphs.
We refer to a directed Hamilton cycle (resp. directed path) in an oriented graph simply as Hamilton cycle (path).
The following basic fact about Hamilton paths and cycles is useful in our proof.  

\begin{fact}\label{ham_path}
Every tournament contains a Hamilton path and every strongly connected tournament contains a Hamilton cycle.
\end{fact}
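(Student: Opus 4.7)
The plan is to prove both assertions by induction on the number of vertices, in each case using a discrete intermediate-value argument to insert a vertex into an existing path or cycle.

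For the Hamilton path, I would proceed by induction on $n$, with $n \leq 2$ trivial. For the inductive step, given a tournament $\vec T$ on $n$ vertices, I would remove an arbitrary vertex $v$ and apply the hypothesis to obtain a Hamilton path $v_1 v_2 \cdots v_{n-1}$ of $\vec T - v$. If $(v, v_1)$ is an arc, prepending $v$ gives the desired path; if $(v_{n-1}, v)$ is an arc, appending $v$ works. Otherwise $(v_1, v)$ and $(v, v_{n-1})$ are both arcs, so there is a largest index $i \in \{1, \ldots, n-2\}$ for which $(v_i, v)$ is an arc; for this $i$, $(v, v_{i+1})$ is necessarily an arc, and inserting $v$ between $v_i$ and $v_{i+1}$ produces the Hamilton path.

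For the Hamilton cycle, let $\vec T$ be a strongly connected tournament on $n \geq 3$ vertices. The strategy is to show first that $\vec T$ contains a triangle, and then that any cycle of length $k < n$ can be extended to one of length $k+1$; iterating yields a Hamilton cycle. For the triangle, fix a vertex $v$ and let $V^+$, $V^-$ denote its out- and in-neighborhoods. Both are nonempty by strong connectivity, and the same assumption forces an arc $(x,y)$ with $x \in V^+$, $y \in V^-$ (else $v$ cannot be reached from $V^+$), producing the triangle $v, x, y, v$. For the extension step, given $C = v_1 v_2 \cdots v_k v_1$ with $k < n$, I would split into two cases. If some $x \notin V(C)$ has both in- and out-arcs to $C$, the discrete intermediate-value argument yields an index $i$ with $(v_i, x)$ and $(x, v_{i+1})$ both arcs, so inserting $x$ between $v_i$ and $v_{i+1}$ gives a $(k+1)$-cycle. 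Otherwise, every vertex of $V(\vec T) \setminus V(C)$ lies in exactly one of $S = \{x : \text{every arc between } x \text{ and } C \text{ goes from } C \text{ to } x\}$ or $T = \{x : \text{every arc between } x \text{ and } C \text{ goes from } x \text{ to } C\}$, and strong connectivity forces both $S$ and $T$ to be nonempty, as well as an arc $(s, t)$ with $s \in S$ and $t \in T$. Then $v_1 v_2 \cdots v_{k-1}\, s\, t\, v_1$ is a cycle of length $k+1$, since $(v_{k-1}, s)$ and $(t, v_1)$ are arcs by the definitions of $S$ and $T$.

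The main obstacle will be the second case of the extension step: one must verify that strong connectivity really forces an $(s,t)$-arc with $s \in S$, $t \in T$. The argument I would use is that if no such arc exists, then starting from $C$ one can only reach $C \cup S$ (from $C$ the outgoing arcs to $V(\vec T)\setminus V(C)$ all go to $S$, and from $S$ there are no arcs to $C$ and, by assumption, none to $T$), which contradicts the strong connectivity between $C$ and $T$. The remainder of the argument is routine once this step is secured.
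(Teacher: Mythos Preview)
Your proof is correct; it is the standard inductive argument for R\'edei's theorem (Hamilton path) together with Camion's cycle-extension argument (Hamilton cycle). Note, however, that the paper does not prove this statement at all: it is presented as a well-known fact and simply invoked later, so there is no paper proof to compare against. Your write-up would stand on its own as a self-contained justification; the only cosmetic point is that in Case~2 of the cycle step the word ``extended'' is slightly misleading, since the new $(k+1)$-cycle $v_1\cdots v_{k-1}\,s\,t\,v_1$ drops $v_k$ rather than enlarging $C$---but this is immaterial, as all you need is the existence of some $(k+1)$-cycle.
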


By using Fact~\ref{ham_path} we can guarantee the existence of strongly connected tournaments of any length in strongly connected tournaments.

\begin{lemma}\label{lemma:all-small-sc-tournaments}
	Every strongly connected tournament \(\vec K\) contains a strongly connected tournament of order \(\ell\) for every \(3\leq \ell\leq |V(\vec K)|\).
\end{lemma}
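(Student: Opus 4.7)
Plan: I would prove the lemma by showing, by induction on $\ell$ from $3$ up to $n := |V(\vec K)|$, that $\vec K$ contains a directed cycle $C_\ell$ of length exactly $\ell$. Once this is established, the induced sub-tournament $\vec K[V(C_\ell)]$ has $C_\ell$ as a spanning directed cycle and is therefore strongly connected (any two of its vertices are linked by a directed path along $C_\ell$), yielding the desired strongly connected sub-tournament of order $\ell$.

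For the base case $\ell = 3$, I fix any vertex $v$, note that strong connectivity forces both its in-neighborhood $I$ and out-neighborhood $O$ to be non-empty, and then observe that there must be at least one edge from $O$ to $I$ (otherwise $\{v\} \cup O$ would have no outgoing edges, contradicting strong connectivity). Any such edge $(a,b)$ with $a \in O$ and $b \in I$ produces the triangle $v \to a \to b \to v$.

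For the inductive step $\ell \to \ell + 1$ (with $\ell < n$), write $C_\ell = v_0 \to v_1 \to \cdots \to v_{\ell-1} \to v_0$ and $W = V(\vec K) \setminus V(C_\ell) \neq \emptyset$. I would split into two cases. If some $w \in W$ has both in- and out-neighbors on $C_\ell$, then walking around the cycle one finds consecutive vertices $v_i, v_{i+1}$ with $(v_i, w)$ and $(w, v_{i+1})$ both edges of $\vec K$, so inserting $w$ between $v_i$ and $v_{i+1}$ gives an $(\ell+1)$-cycle. Otherwise, every $w \in W$ has all its edges with $V(C_\ell)$ pointing the same way, splitting $W$ into $A = \{w : w \text{ dominates } V(C_\ell)\}$ and $B = \{w : V(C_\ell) \text{ dominates } w\}$; strong connectivity forces both $A$ and $B$ to be non-empty and also forces at least one edge $(b,a)$ with $b \in B$ and $a \in A$, since $B$'s only possible outgoing edges outside itself go to $A$. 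The cycle $v_0 \to v_1 \to \cdots \to v_{\ell-2} \to b \to a \to v_0$ then has length $\ell + 1$, with each edge present by construction.

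The delicate point is the second case: one cannot extend $C_\ell$ by a single vertex from $A$ or $B$ alone, since such a vertex would be a source or sink of the resulting induced sub-tournament. The extension must therefore involve one vertex from $A$ and one from $B$ together with the edge $(b,a)$, and the idea is to regard the detour $v_{\ell-2} \to b \to a \to v_0$ as a replacement for the two consecutive cycle edges $(v_{\ell-2}, v_{\ell-1})$ and $(v_{\ell-1}, v_0)$, simultaneously dropping the vertex $v_{\ell-1}$ and inserting the two new vertices $b$ and $a$, producing a net gain of exactly one vertex on the cycle.
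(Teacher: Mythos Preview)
Your proof is correct, but it follows a genuinely different route from the paper's. The paper argues by induction on $n=|V(\vec K)|$: it invokes Fact~\ref{ham_path} to obtain a Hamilton cycle $(x_1\cdots x_n x_1)$, and then either finds a forward chord $(x_i,x_{i+2})$, in which case deleting $x_{i+1}$ leaves a strongly connected tournament on $n-1$ vertices and induction applies, or else all such chords point backward, in which case every initial segment $\{x_1,\ldots,x_\ell\}$ is strongly connected directly. Your argument instead runs induction on $\ell$ and is essentially a self-contained proof that strongly connected tournaments are pancyclic (Moon's theorem): you build cycles upward from a triangle, handling the ``uninsertable'' case by the $A/B$ split and the swap $v_{\ell-2}\to b\to a\to v_0$. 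The paper's approach is shorter once Fact~\ref{ham_path} is granted and has a lighter case analysis; your approach has the virtue of not relying on Camion's theorem as a black box and in fact recovers the Hamilton-cycle part of Fact~\ref{ham_path} at $\ell=n$.
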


\begin{proof}
  Let $\vec K$ be a strongly connected tournament.
  By Fact~\ref{ham_path}, $\vec K$ contains a Hamilton cycle.
	The proof is by induction on \(n=|V(\vec K)|\).
	Clearly, if \(n=3\), then \(\vec K\) is a strongly connected orientation of \(K_3\), and the statement follows.
	Thus, we assume \(n\geq 4\).
	Let \(C = (x_1 x_2 \cdots x_n x_1)\) be a Hamilton cycle in \(\vec K\).
	If \((x_i, x_{i+2})\in E(\vec K)\) for some $i \in \{1,\ldots,n\}$, where indices are taken modulo $n$, then \(C' = (x_1 \cdots x_i x_{i+2} \cdots x_n x_1)\) is
	a Hamilton cycle of \(\vec K'\), the tournament obtained from $\vec K$ by removing $x_{i+1}$.
        
	By the induction hypothesis, \(K'\) contains a strongly connected tournament of order \(\ell\) for every \(3\leq \ell\leq |V(K')|=n-1\) and the statement follows for \(K\).
	Thus, we may assume that \((x_{i+2},x_i)\in E(\vec K)\) for every \(i\in[n]\) (indices taken modulo \(n\)).
	In this case, for each \(3\leq \ell\leq n\), 
	the set \(V_\ell = \{x_1,\ldots,x_\ell\}\) induces a strongly connected tournament of order \(\ell\) 
	because \(\{x_i,x_{i+1},x_{i+2}\}\) induces a strongly connected orientation of \(K_3\) for every $i \in \{1,\ldots,\ell-2\}$.
      \end{proof}

      Recall that \(\S_k\) is the family of all strongly connected tournaments with
$k$ vertices.
Let $K$ be a complete subgraph of a graph $G$ and let $\vec K$ be an $\S_k$-free orientation of $K$.
Given vertices $u$ and $v$ of $G$, recall that we write $c_{\S_k}(u,\vec K)$ for the number of orientations of the set of edges $\{u,w\}$ with $w\in K$ that extend $\vec{K}$ keeping the $\S_k$-freeness of the orientation.
In the next lemma we present a bound on $c_{\S_k}(u,\vec K)$ for particular sizes of $K$ and particular vertices $u$.

\begin{lemma}\label{lemma:cliqueExtension}
  Let $K$ be a complete subgraph of a graph $G$ with $x\in \{k-1,k,k+1\}$
  vertices and let $u$ be a vertex in $V(G)\setminus V(K)$ that is
  adjacent to all vertices of $K$.
  Then, for any orientation $\vec K$ of $K$, we have
  $
  c_{\S_k}(u,\vec K) \leq (x-k+4)\cdot 2^{k-3}.
  $
\end{lemma}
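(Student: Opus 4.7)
The plan is to apply the characterization (from Lemma~\ref{lemma:all-small-sc-tournaments}) that a tournament contains a strongly connected $K_k$ if and only if some SCC has size at least $k$; equivalently, $K\cup\{u\}$ is $\S_k$-free exactly when every SCC of $K\cup\{u\}$ has size at most $k-1$. If $\vec K$ already has an SCC of size $\geq k$, then $c_{\S_k}(u,\vec K)=0$ and the bound is trivial, so I will assume the SCCs $S_1>\cdots>S_r$ of $\vec K$ satisfy $|S_i|=s_i\leq k-1$ with $\sum_i s_i=x$.

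I would then show that, when $u$ is added with a given orientation, the SCCs of $K\cup\{u\}$ are obtained from those of $\vec K$ by merging a consecutive range $S_a\cup\cdots\cup S_b$ with $\{u\}$ into a single SCC, where $a=\min\{i:u\text{ has an out-neighbor in }S_i\}$ and $b=\max\{i:u\text{ has an in-neighbor in }S_i\}$ (and with $u$ isolated in the condensation if $a>b$). Counting orientations of the $u$-edges by the resulting pattern $(a,b)$, the number with merged range $[a,b]$ equals
\[
N_{a,b}=(2^{s_a}-1)(2^{s_b}-1)\cdot 2^{s_{a+1}+\cdots+s_{b-1}} \quad (a<b),
\]
$N_{a,a}=2^{s_a}-2$, while $r+1$ orientations leave $u$ isolated. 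An orientation is $\S_k$-free precisely when the merged SCC has size $\leq k-1$, i.e.\ when $u$ is isolated or $s_a+\cdots+s_b\leq k-2$. Thus the bound reduces to showing $\mathrm{Bad}:=\sum_{s_a+\cdots+s_b\geq k-1}N_{a,b}\geq 2^x-(x-k+4)\cdot 2^{k-3}$.

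The main inequality I will use is $(2^p-1)(2^q-1)\geq 2^{p+q-2}$ for all $p,q\geq 1$ (with equality iff $p=q=1$), together with the stronger $(2^p-1)(2^q-1)\geq 2^{p+q-1}$ when $p,q\geq 2$; both are immediate from the expansion $(2^p-1)(2^q-1)=2^{p+q}-2^p-2^q+1$. For $x=k-1$, only $[a,b]=[1,r]$ contributes to $\mathrm{Bad}$, and the basic inequality gives $N_{1,r}\geq 2^{k-3}$, yielding $c_{\S_k}(u,\vec K)\leq 3\cdot 2^{k-3}$. For $x=k$, $\mathrm{Bad}$ also includes $N_{2,r}$ (iff $s_1=1$) and $N_{1,r-1}$ (iff $s_r=1$); a short case split on whether $s_1,s_r$ equal $1$ or are $\geq 2$, applying the appropriate form of the inequality, yields $\mathrm{Bad}\geq 4\cdot 2^{k-3}$.

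The main obstacle is the case $x=k+1$, where additional bad patterns arise: $[2,r]$ when $s_1\leq 2$, $[1,r-1]$ when $s_r\leq 2$, $[3,r]$ when $s_1=s_2=1$, $[1,r-2]$ when $s_{r-1}=s_r=1$, and $[2,r-1]$ when $s_1=s_r=1$. I would perform a sub-case analysis on the values of $s_1$ and $s_r$ (each in $\{1,2,\geq 3\}$), combining two or three bad contributions at a time through the key inequality to establish $\mathrm{Bad}\geq 11\cdot 2^{k-3}$. The transitive tournament ($r=k+1$, all $s_i=1$) realises equality: the six bad contributions sum to $(4+2+2+1+1+1)\cdot 2^{k-3}=11\cdot 2^{k-3}$, matching the claimed bound $c_{\S_k}(u,\vec K)\leq 5\cdot 2^{k-3}$.
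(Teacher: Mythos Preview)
Your approach is correct but takes a genuinely different route from the paper's. The paper never touches the SCC decomposition of $\vec K$; it uses only the existence of a Hamilton path $(v_1,\ldots,v_x)$ in $\vec K$ (Fact~\ref{ham_path}) and argues directly. If $j\leq x-k+2$ is the first index with $\{u,v_j\}$ oriented towards $v_j$, then every edge $\{u,v_i\}$ with $i\geq j+k-2$ is forced towards $v_i$ (otherwise $u\,v_j\,v_{j+1}\cdots v_i\,u$ is a directed cycle of length $\geq k$, which by Lemma~\ref{lemma:all-small-sc-tournaments} forces a strongly connected $K_k$); the $k-3$ intermediate edges are free, giving at most $2^{k-3}$ extensions. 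If instead $\{u,v_1\},\ldots,\{u,v_{x-k+2}\}$ are all oriented towards $u$, the remaining $k-1$ edges are unconstrained. Summing gives $(x-k+2)\cdot 2^{k-3}+2^{k-2}=(x-k+4)\cdot 2^{k-3}$ in one line, with no further inequalities or case splits.

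Your SCC route has the advantage of being exact: it computes $c_{\S_k}(u,\vec K)$ precisely for every $\vec K$, and so identifies the transitive tournament as the unique extremal $\vec K$. The cost is a much longer verification for $x=k+1$: the split on $s_1,s_r\in\{1,2,\geq 3\}$ already gives nine sub-cases, several of which (for instance $s_1=1$, $s_r\geq 3$) further bifurcate on $s_2$ or on whether $r=2$, and in each you must combine two or three $N_{a,b}$ terms via your product inequality to reach $11\cdot 2^{k-3}$. All of this can be made to work, but the paper's Hamilton-path argument sidesteps it entirely by deliberately discarding all structural information about $\vec K$ beyond a single Hamilton path, trading sharpness for a two-paragraph proof.
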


\begin{proof}
We assume $x=k+1$ as the proof for the other values of $x$ is analogous.
Let \(K\), \(G\) and \(u\) be as in the statement.
Let $\vec K$ be an orientation of $K$ and note that by Fact~\ref{ham_path} there is a Hamilton path $(v_1,\ldots,v_{k+1})$ in $\vec K$.
By Lemma~\ref{lemma:all-small-sc-tournaments}, any orientation of the edges between $u$ and $\vec K$ that
extends $\vec K$ to an $\S_k$-free orientation cannot form a directed cycle of length \(k\).
If \(\{u,v_1\}\) is oriented towards \(v_1\),
then the edges between any \(w\) in \(\{v_{k-1}, v_k, v_{k+1}\}\) and $u$ must be oriented towards $w$. 
Since the edges \(\{u,v_j\}\) with \(j\in\{2,\ldots,k-2\}\) can be oriented in two ways,
there are at most \(2^{k-3}\) such possible orientations.
If \(\{u,v_1\}\) is oriented towards \(u\) and $\{u,v_2\}$ is oriented towards $v_2$,
then the edges between any \(w\) in \(\{v_k, v_{k+1}\}\) and $u$ are oriented towards $w$. 
Again, since the edges \(\{u,v_j\}\) with \(j\in\{3,\ldots,k-1\}\) can be oriented in two ways,
there are at most \(2^{k-3}\) such possible orientations.
Analogously, if $\{u,v_1\}$ and $\{u,v_2\}$
are oriented towards $u$ and $\{u,v_3\}$ is oriented towards $v_3$,
then there are at most $2^{k-3}$ such possible orientations.
Finally, there are at most $2^{k-2}$ such  possible orientations
for which $\{u,v_1\}$,  $\{u,v_2\}$ and $\{u,v_3\}$ are oriented towards $u$.
Therefore, 
$ c_{\S_k}(u,\vec K_x) \leq 2^{k-3}+ 2^{k-3} + 2^{k-3} + 2^{k-2} = 5\cdot 2^{k-3}$, as desired.
\end{proof}

As before, let $K$ be a complete subgraph of a graph $G$ and let $\vec K$ be an $\S_k$-free orientation of $K$.
Given an edge $\{u,v\}$ of $G$, similar to Lemma~\ref{lemma:cliqueExtension}, we now provide a bound on $c_{\S_k}(\{u,v\},\vec K)$, where we recall that $c_{\S_k}(\{u,v\},\vec K)$ stands for the number of ways to extend $\S_k$-free orientations by orienting the edges in  $E(G[V(K)\cup\{u,v\}])\setminus E(K)$.

\begin{lemma}\label{lemma:cliqueExtension-edge}
  Let $K$ be a complete subgraph of a graph $G$ with \(k-1\)
  vertices and let $u$, $v$ be adjacent vertices in $V(G)\setminus V(K)$ that are
  adjacent to all vertices of $K$.
  Then,
  $
  c_{\S_k}(\{u,v\},\vec K) <2\cdot 3\cdot 2^{2k-5}.
  $
\end{lemma}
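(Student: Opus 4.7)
The plan is to bound $c_{\S_k}(\{u,v\}, \vec K)$ by cascading two applications of Lemma~\ref{lemma:cliqueExtension}, adding the vertices $v$ and $u$ to $\vec K$ in sequence. Writing $K_v := G[V(K) \cup \{v\}]$, which has $k$ vertices, observe that every $\S_k$-free orientation of the $2k-1$ edges in $E(G[V(K) \cup \{u,v\}]) \setminus E(K)$ restricts to an $\S_k$-free orientation $\vec{K_v}$ of $K_v$ extending $\vec K$, since $V(K) \cup \{v\}$ is itself a $k$-vertex subset. This gives
\[
c_{\S_k}(\{u,v\}, \vec K) \;=\; \sum_{\vec{K_v}} c_{\S_k}(u, \vec{K_v}),
\]
where the sum ranges over the $\S_k$-free orientations $\vec{K_v}$ of $K_v$ extending $\vec K$, and each term $c_{\S_k}(u, \vec{K_v})$ counts the orientations of the remaining $k$ edges between $u$ and $V(K_v)$ (a set that includes the edge $\{u,v\}$) that produce an $\S_k$-free orientation on $V(K_v) \cup \{u\} = V(K)\cup\{u,v\}$.

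For the outer sum, Lemma~\ref{lemma:cliqueExtension} applied with $x = k-1$ to $v$ and $K$ gives at most $c_{\S_k}(v, \vec K) \leq 3 \cdot 2^{k-3}$ terms. For each such term, Lemma~\ref{lemma:cliqueExtension} applied with $x = k$ to $u$ and the $k$-clique $K_v$ carrying the orientation $\vec{K_v}$ gives $c_{\S_k}(u, \vec{K_v}) \leq 4\cdot 2^{k-3} = 2^{k-1}$. Multiplying the two bounds yields
\[
c_{\S_k}(\{u,v\}, \vec K) \;\leq\; 3 \cdot 2^{k-3} \cdot 2^{k-1} \;=\; 2\cdot 3 \cdot 2^{2k-5}.
\]

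The main obstacle is sharpening this $\le$ to the strict $<$ stated in the lemma. For this, I would argue that the two upper bounds above cannot be tight simultaneously for every term in the sum. Inspecting the case analysis in the proof of Lemma~\ref{lemma:cliqueExtension}, tightness of the bound $3\cdot 2^{k-3}$ for $x=k-1$ forces $\vec K$ into a very restricted \emph{near-transitive} form (essentially a unique Hamilton path $v_1\to\cdots\to v_{k-1}$ with no additional shortcut edges). In that situation the sum necessarily contains orientations $\vec{K_v}$ in which adding $v$ creates a new $3$-cycle, and then $K_v$ admits several Hamilton paths that begin at different vertices. For each such $\vec{K_v}$, the $(k{+}1)$-tournament on $V(K)\cup\{u,v\}$ must avoid Hamilton cycles (and, by Lemma~\ref{lemma:all-small-sc-tournaments}, longer strongly connected subtournaments) beyond those enumerated in the proof of Lemma~\ref{lemma:cliqueExtension} with $x=k$, so $c_{\S_k}(u,\vec{K_v}) < 2^{k-1}$ holds for at least one term in the sum. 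If instead $\vec K$ does not have this near-transitive structure, the first bound is itself strict and the conclusion is immediate.
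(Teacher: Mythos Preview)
Your cascading argument for the non-strict bound $c_{\S_k}(\{u,v\},\vec K)\le 2\cdot 3\cdot 2^{2k-5}$ is correct, and it is genuinely different from the paper's proof. The paper fixes a Hamilton path $(v_1,\ldots,v_{k-1})$ in $\vec K$, fixes the direction of $\{u,v\}$, and then performs a direct four-way case split on the orientations of $\{u,v_1\}$, $\{u,v_2\}$, and $\{v,v_1\}$, obtaining case upper bounds $2^{2k-6}$, $2^{2k-5}$, $2^{2k-6}$, $2^{2k-5}$ that sum to $3\cdot 2^{2k-5}$. Your route is more modular: you reduce the two-vertex extension to two successive one-vertex extensions and invoke Lemma~\ref{lemma:cliqueExtension} with $x=k-1$ and $x=k$. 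The decomposition $c_{\S_k}(\{u,v\},\vec K)=\sum_{\vec{K_v}} c_{\S_k}(u,\vec{K_v})$ is valid exactly as you state it, and both applications of Lemma~\ref{lemma:cliqueExtension} are within its hypotheses.

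There is, however, a real gap in your treatment of the strict inequality. The claim that tightness of $3\cdot 2^{k-3}$ ``forces $\vec K$ into a very restricted near-transitive form'' is asserted, not proved (you would need to analyse the equality case of Lemma~\ref{lemma:cliqueExtension}, which that lemma does not address), and the subsequent claim that some $\vec{K_v}$ in the sum must satisfy $c_{\S_k}(u,\vec{K_v})<2^{k-1}$ is only a heuristic. As written this paragraph is not a proof. That said, the paper's own argument has the same defect: its final displayed line concludes with $\le 2\cdot 3\cdot 2^{2k-5}$, not $<$, so the strict inequality in the lemma's statement is not actually established there either. The only place the lemma is used (Corollary~\ref{corollary:small-cliques}) requires just the non-strict bound, so the ``$<$'' appears to be a harmless overstatement. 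Your $\le$ argument is therefore on equal footing with the paper's, obtained by a cleaner route; if you want the strict inequality you still owe a rigorous argument, but so does the paper.
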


\begin{proof}
Let \(K\), \(G\) and \(u\) be as in the statement.
Let $\vec K$ be an orientation of $K$ and note that by Fact~\ref{ham_path} there is a Hamilton path $(v_1,\ldots,v_{k-1})$ in $\vec K$.
There are two possible orientations for the edge $\{u,v\}$.
Let us estimate in how many ways one can orient the edges between $\vec K$ and $\{u,v\}$ without creating a strongly connected $K_k$.
Suppose, without loss of generality, that \(\{u,v\}\) is oriented towards \(u\).

Note that by Lemma~\ref{lemma:all-small-sc-tournaments}, any orientation of the edges between $u$ and $\vec K$ that
extends $\vec K$ to an $\S_k$-free orientation cannot form a directed cycle of length \(k\).
Analogously to the proof of Lemma~\ref{lemma:cliqueExtension}, 
if \(\{u,v_1\}\) is oriented towards \(v_1\),
then the edge between any \(v_{k-1}\) and $u$ must be oriented towards $v_{k-1}$,
and the edges between any \(w\) in \(\{v_{k-2},v_{k-1}\}\) and $v$ must be oriented towards $w$.
Since the remaining edges from \(u\) or \(v\) to \(K\) can be oriented in two ways,
there are at most \(2^{2k-6}\) possible orientations.
Similarly, 
there are at most \(2^{2k-5}\) possible orientations
in which \(\{u,v_1\}\) is oriented towards \(u\) and $\{u,v_2\}$ is oriented towards $v_2$,
and there are at most \(2^{2k-6}\) possible orientations
in which \(\{u,v_1\}\) and \(\{u,v_2\}\) are oriented towards \(u\),
and \(\{v,v_1\}\) is oriented towards \(v_1\).
Finally, there are at most $2^{2k-5}$ possible orientations
in which \(\{u,v_1\}\) and \(\{u,v_2\}\) are oriented towards \(u\),
and \(\{v,v_1\}\) is oriented towards \(v\).
Therefore, 
$ c_{\S_k}(uv,\vec K_x) \leq 2 \cdot (2^{2k-6} + 2^{2k-5} + 2^{2k-6} + 2^{2k-5}) 
= 2\cdot 3\cdot 2^{2k-5}$.
\end{proof}

The next result states that for \(k\geq 5\), at least half of the orientations of \(K_k\) are strongly connected.
Let \(\SC(G)\) denote the number of strongly connected orientations of \(G\).

\begin{lemma}\label{lemma:most-of-the-orientations-are-strongly-connected}
	Let \(k\geq 5\) be a positive integer.
	Then \(\SC(K_k)> 2^{{k\choose 2} - 1}\).
\end{lemma}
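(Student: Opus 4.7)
The plan is to proceed via the classical recurrence that expresses every tournament through its unique terminal strongly connected component (the sink of the condensation of the tournament). Setting $p_k = \SC(K_k)/2^{\binom{k}{2}}$, one chooses the $i$ vertices forming the sink SCC, orients them as a strongly connected tournament, orients the remaining $k-i$ vertices as an arbitrary tournament, and directs every cross edge into the sink; this gives
\[
2^{\binom{k}{2}} = \sum_{i=1}^{k}\binom{k}{i}\SC(K_i)\, 2^{\binom{k-i}{2}},
\]
which after dividing by $2^{\binom{k}{2}}$ and isolating $p_k$ becomes
\[
p_k \;=\; 1 \;-\; \sum_{i=1}^{k-1}\binom{k}{i}\, p_i\, 2^{-i(k-i)}.
\]
The lemma then reduces to showing that this sum is strictly less than $1/2$ for every $k \ge 5$.

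Quick applications of the recurrence (or direct inspection) yield the base values $p_1 = 1$, $p_2 = 0$, $p_3 = 1/4$, and $p_4 = 3/8$. For $k = 5$ I would plug these values directly into the recurrence to obtain the sum $15/32$, so $p_5 = 17/32 > 1/2$. For $k = 6$ I would use only the trivial bound $p_i \le 1$ together with the identity $p_2 = 0$; the $i = 2$ term vanishes and the remaining contributions add up to $121/256 < 1/2$.

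For $k \ge 7$ the bound $p_i \le 1$ alone is enough. I would split $\sum_{i=1}^{k-1}\binom{k}{i}2^{-i(k-i)}$ into the two extreme terms $i\in\{1,k-1\}$, contributing $2k\cdot 2^{-(k-1)}$, and the middle terms $2 \le i \le k-2$. Since $i(k-i) \ge 2(k-2)$ throughout the middle range, that part is at most $2^{-2(k-2)}\sum_{i=2}^{k-2}\binom{k}{i} \le 2^{4-k}$. So the whole sum is at most $(4k+16)\cdot 2^{-k}$, which is below $1/2$ precisely when $k+4 < 2^{k-3}$, a condition trivially verified at $k = 7$ and propagated upward by an obvious induction.

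The only genuine obstacle is the tightness at the small end: for both $k = 5$ and $k = 6$ the recurrence lands close to $1/2$, so the argument cannot rely purely on $p_i \le 1$ and must actually incorporate the information that $p_2 = 0$ (and, for $k = 5$, the exact values of $p_3$ and $p_4$). Once this small-$i$ input is accounted for, the remaining estimates are routine.
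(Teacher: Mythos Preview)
Your argument is correct and takes a genuinely different route from the paper. The paper proves the lemma by a direct induction on $k$: it removes a single vertex $u_k$, observes that a strongly connected orientation of $K_{k-1}$ can be extended in exactly $2^{k-1}-2$ ways to a strongly connected orientation of $K_k$, while any non-strongly-connected orientation of $K_{k-1}$ (using its Hamilton path) can be extended in at least $2^{k-3}$ ways, and then combines these two lower bounds with the inductive hypothesis. The base case $k=5$ is handled by a somewhat ad hoc count yielding $\SC(K_5)=544$.

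You instead invoke the classical sink-component recurrence $2^{\binom{k}{2}}=\sum_{i=1}^{k}\binom{k}{i}\SC(K_i)\,2^{\binom{k-i}{2}}$ and work with the normalised quantities $p_k$. This is cleaner in several respects: it produces the exact value $p_5=17/32$ with no case analysis, and the tail estimate $(4k+16)2^{-k}<1/2$ for $k\ge 7$ is transparent and sharp enough without any induction on the $p_i$. What the paper's approach buys is self-containment: it does not assume the reader knows the condensation decomposition of tournaments, and its one-vertex-extension count feeds naturally into the other lemmas of the section, which are all of the same ``extend a clique by one vertex'' flavour. Your approach buys brevity and a clearer quantitative picture (indeed it shows $p_k\to 1$), at the cost of importing an external identity.
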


\begin{proof}
The proof is by induction on \(k\).
Let \(\{u_1,\ldots, u_k\}\) be the vertex set of \(G=K_k\) and consider the complete graph \(G'=G-u_k\) on $k-1$ vertices.
First, suppose \(k=5\). We prove that $\SC(K_5)< 512$.
It is easy to see that out of the $40$ not strongly connected orientations of $G'$, there are $24$ transitive orientations and $16$ non-transitive orientations.
Each of these non-transitive orientation contains a directed $C_3$,
and there are $7$ ways to extend this directed $C_3$ to obtain a strongly connected orientation of $G$.
Noting that there are $24$ strongly connected orientations of $G'$, we have

\begin{equation*}
\SC(K_5)   = 24\cdot 4 + 24\cdot (2^4-2) + 16\cdot 7 = 544 > 2^{{5\choose 2} - 1}.
\end{equation*}
    
Now, let $k\geq 6$ and suppose that \(\SC(K_{k-1})> 2^{{k-1\choose 2} - 1}\).
Let \(\vec{G'}\) be an orientation of \(G'\).
If \(\vec{G'}\) is strongly connected, 
then we obtain a strongly connected orientation of \(G\) if and only if
the remaining edges are oriented so that $u_k$ has in degree and out degree at least 1.
Therefore, \(\vec{G'}\) can be extended in precisely \(2^{k-1}-2\) ways
to strongly connected orientation of \(G\).
Thus, we may assume that \(\vec{G'}\) is not strongly connected.
By Fact~\ref{ham_path}, \(G'\) contains a Hamilton path \(P\).
We may assume, without loss of generality, that \(P=(u_1,\ldots, u_{k-1})\).
By orienting \(\{u_1,u_k\}\) towards \(u_1\) and \(\{u_{k-1},u_k\}\) towards \(u_k\),
and orienting the edges \(\{u_i,u_k\}\) in any direction, for \(i=1,\ldots,k-2\),
we obtain a strongly connected orientation of \(G\).
Thus, there are at least \(2^{k-3}\) strongly connected orientations of \(G\) from \(\vec{G'}\).
Therefore, we have

\begin{align*}
\SC(K_k)  & \geq (2^{k-1}-2)\cdot \SC(K_{k-1}) + 2^{k-3}\cdot (2^{{k-1}\choose 2} - \SC(K_{k-1})) \\
& = (2^{k-1}-2 - 2^{k-3})\cdot \SC(K_{k-1}) + 2^{k-3} 2^{{k-1}\choose 2} \\
& > 2^{k-3} 2^{{k-1}\choose 2} + 2^{k-2} 2^{{k-1 \choose 2}-1}  + (2^{k-2} - 2 - 2^{k-3}) \SC(K_{k-1})\\
  & = 2^{{k \choose 2} - 1} + (2^{k-2} - 2 - 2^{k-3})2^{{k-1 \choose 2}-1}\\
  & > 2^{{k \choose 2} - 1},
\end{align*}
which concludes the proof.
\end{proof}

Given a graph $G$, we denote by $\S_k(G)$ the family of $\S_k$-free orientations of $G$ and we write $S_k(G) = |\S_k(G)|$.
Combining some of the previous results, we prove that cliques of size \(k\) and \(k+1\) are not \(\S_k\)-extremal.

\begin{corollary}\label{corollary:small-cliques}
	For \(k\geq 5\) we have \(S_k(K_k) < 2^{t_{k-1}(k)}\) and 
	for \(k\geq 4\) we have \(S_k(K_{k+1}) < 2^{t_{k-1}(k+1)}\).
\end{corollary}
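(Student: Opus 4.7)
The plan is as follows. First, I note the identities $t_{k-1}(k) = \binom{k}{2}-1$ (since $T_{k-1}(k)$ is $K_k$ with a single edge removed) and $t_{k-1}(k+1) = \binom{k+1}{2}-2$ (since $T_{k-1}(k+1)$ has two parts of size~$2$). For the first assertion I would use that an orientation of $K_k$ is $\S_k$-free exactly when it fails to be strongly connected, so $S_k(K_k) = 2^{\binom{k}{2}} - \SC(K_k)$; for $k\geq 5$, Lemma~\ref{lemma:most-of-the-orientations-are-strongly-connected} gives $\SC(K_k) > 2^{\binom{k}{2}-1}$, which immediately yields $S_k(K_k) < 2^{\binom{k}{2}-1} = 2^{t_{k-1}(k)}$.

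For the second assertion with $k\geq 5$, I would fix any vertex $u$ of $K_{k+1}$ and set $K = K_{k+1}-u\cong K_k$. Since every $\S_k$-free orientation of $K_{k+1}$ restricts to an $\S_k$-free orientation of $K$, the count splits as $S_k(K_{k+1}) = \sum_{\vec K} c_{\S_k}(u,\vec K)$, with the sum over $\S_k$-free orientations $\vec K$ of $K$. Applying Lemma~\ref{lemma:cliqueExtension} with $x=k$ bounds each term by $c_{\S_k}(u,\vec K) \leq 4\cdot 2^{k-3} = 2^{k-1}$, and combining with the first assertion gives $S_k(K_{k+1}) < 2^{\binom{k}{2}-1}\cdot 2^{k-1} = 2^{\binom{k+1}{2}-2} = 2^{t_{k-1}(k+1)}$.

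The main obstacle is the base case $k=4$: here $S_4(K_4) = 40 > 2^{\binom{4}{2}-1} = 32$, so Lemma~\ref{lemma:most-of-the-orientations-are-strongly-connected} is unavailable, and the extension bound $S_4(K_4)\cdot 2^{k-1} = 40\cdot 8 = 320$ already exceeds the target $2^{t_3(5)} = 256$. I would resolve this by evaluating $S_4(K_5)$ exactly via the condensation of a tournament into its strongly connected components. By Lemma~\ref{lemma:all-small-sc-tournaments}, no $\S_4$-free orientation of $K_5$ can have a condensation block of size $\geq 4$, and tournaments have no strongly connected components of size~$2$; conversely, if every block has size at most~$3$ then any $4$-vertex subset meets at least two blocks and therefore induces a non-strongly-connected sub-tournament. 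Thus the valid orientations are exactly those with condensation part sizes in $\{1,3\}$, giving the partitions $(1,1,1,1,1)$ and $(3,1,1)$: the former contributes $5! = 120$ transitive orientations, while the latter contributes $3 \cdot \binom{5}{3} \cdot 2 \cdot 2! = 120$ (by choosing the topological position of the $3$-block, its vertex set, its cyclic orientation, and the order of the two singletons). Therefore $S_4(K_5) = 240 < 256 = 2^{t_3(5)}$.
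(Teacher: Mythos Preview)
Your proof is correct and is in fact cleaner than the paper's. The paper also notes $t_{k-1}(k)=\binom{k}{2}-1$ and $t_{k-1}(k+1)=\binom{k+1}{2}-2$, and for $k=5$ it argues exactly as you do that $S_5(K_5)=2^{\binom{5}{2}}-\SC(K_5)<2^9$. However, for $k\geq 6$ the paper does \emph{not} apply Lemma~\ref{lemma:most-of-the-orientations-are-strongly-connected} directly to $K_k$; instead it decomposes over orientations of $K_{k-1}$, splitting according to whether the $K_{k-1}$ is strongly connected (extending in $2$ ways) or not (extending in at most $3\cdot 2^{k-3}$ ways by Lemma~\ref{lemma:cliqueExtension} with $x=k-1$), and then invokes Lemma~\ref{lemma:most-of-the-orientations-are-strongly-connected} for $K_{k-1}$. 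Similarly, for $S_k(K_{k+1})$ the paper extends from $K_{k-1}$ by an \emph{edge} using Lemma~\ref{lemma:cliqueExtension-edge}, again splitting on whether the $K_{k-1}$ is strongly connected, whereas you extend from $K_k$ by a single vertex using Lemma~\ref{lemma:cliqueExtension} with $x=k$ and simply multiply by the already-established bound $S_k(K_k)<2^{\binom{k}{2}-1}$. Your route is shorter and, notably, makes Lemma~\ref{lemma:cliqueExtension-edge} unnecessary for this corollary.

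For the base case $S_4(K_5)$ the paper obtains the bound $240$ by counting non-transitive $\S_4$-free orientations via a chosen directed triangle and arguing that at most $6$ orientations of the remaining edges avoid a strongly connected $K_4$; this is presented as an upper bound. Your condensation argument is a genuine alternative: it characterises the $\S_4$-free orientations of $K_5$ exactly as those whose strongly connected components all have size $1$ or $3$, and counts them as $5!+3\binom{5}{3}\cdot 2\cdot 2!=240$, matching the paper's bound and showing it is sharp.
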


\begin{proof}
We start by showing that $S_4(K_5) < 2^{t_3(5)} = 2^8$.
Every non-transitive orientation of $K_5$ contains a $\circcycle_3$.
We claim that there are precisely ${5\choose 2}\cdot 2\cdot 6$ such non-transitive orientations without a strongly connected $K_4$.
In fact, there are ${5\choose 2}$ triangles in a $K_5$ and $2$ strongly connected orientations of each triangle.
It is not hard to see that there are at most $6$ ways to orient the edges outside the triangle to obtain an orientation of $K_5$ with no strongly connected $K_4$.
Since there are $120$ transitive orientations of $K_5$, we obtain $S_4(K_5)\leq 120 + {5\choose 2}\cdot 2\cdot 6 = 240 < 2^8$.

Thus, assume $k\geq 5$.
Note that a strongly connected orientation of \(K_{k-1}\) 
can be extended in only two (resp. six) ways to an orientation of \(K_k\) (resp. \(K_{k+1}\)) 
that does not contain a strongly connected $K_k$.
By Lemma~\ref{lemma:cliqueExtension}, every orientation of \(K_{k-1}\)
can be extended in at most \(3\cdot 2^{k-3}\) ways 
to an \(\S_k\)-free orientation of \(K_k\), which gives

\begin{equation}\label{eq:skkk}
S_k(K_k)  \leq 2\cdot\SC(K_{k-1}) + 3\cdot 2^{k-3} \big(2^{{k-1\choose 2}} - \SC(K_{k-1})\big).
\end{equation}

To obtain an estimate for $S_k(K_{k+1})$ we use Lemma~\ref{lemma:cliqueExtension-edge}, which implies that every orientation of \(K_{k-1}\)
can be extended in at most \(2\cdot 3\cdot 2^{2k-5}\) ways 
to an \(\S_k\)-free orientation of \(K_{k+1}\).
Then,

\begin{equation}\label{eq:skkk1}
  S_k(K_{k+1})	 \leq 6\cdot\SC(K_{k-1}) + 2\cdot 3\cdot 2^{2k-5} \big(2^{{k-1\choose 2}} - \SC(K_{k-1})\big).
\end{equation}

If $k=5$, then, by Lemma~\ref{lemma:most-of-the-orientations-are-strongly-connected}, 
we have $\S_5(K_5) = 2^{{5 \choose 2}} - \SC(K_5) < 2^{9} = 2^{t_4(5)}$.
From~\eqref{eq:skkk1}, since $\SC(K_4) = 24$, we obtain $\S_5(K_6) \leq 6\cdot 24 + 2\cdot 3\cdot 2^5(2^6 - 24) = 7824 < 2^{13} = 2^{t_{4}(6)}$.

Now we assume that $k\geq 6$.
From~\eqref{eq:skkk}, using Lemma~\ref{lemma:most-of-the-orientations-are-strongly-connected} and \(t_{k-1}(k) = t_{k-1}(k-1) + k-2 = {k-1\choose 2} + k-2\), we have

\begin{align*}
	S_k(K_k)	& \leq 2\cdot\SC(K_{k-1}) + 3\cdot 2^{k-3} \big(2^{{k-1\choose 2}} - \SC(K_{k-1})\big) \\
			 	& =  3\cdot 2^{k-3} 2^{{k-1\choose 2}} -\left( 3\cdot 2^{k-3} -2\right) \SC(K_{k-1}) \\
			 	& <  3\cdot 2^{k-4} 2^{{k-1\choose 2}} + 2^{{k-1\choose 2}}\\
			 	& \leq 2^{k-2} 2^{{k-1\choose 2}} = 2^{t_{r-1}(k)}.
\end{align*}

Analogously, from~\eqref{eq:skkk1}, since \(t_{k-1}(k+1) = t_{k-1}(k-1) + 2k-3 = {k-1\choose 2} + 2k-3\) we have

\begin{align*}
	S_k(K_{k+1})	& \leq 6\cdot\SC(K_{k-1}) + 2\cdot 3\cdot 2^{2k-5} \big(2^{{k-1\choose 2}} - \SC(K_{k-1})\big) \\
				 	& \leq 3\cdot 2^{2k-5} 2^{{k-1\choose 2}} +  3 \cdot2^{{k-1\choose 2}} \\
				 	& \leq 2^{2k-3} 2^{{k-1\choose 2}} = 2^{t_{r-1}(k+1)},
\end{align*}
which finishes the proof.
\end{proof}

\subsection{Proof of the main result}

For simplicity, we write $S_k(n)$ for $D(n,\S_k)$, i.e., $S_k(n) = \max\{{S}_k(G)\colon G\text{ is an $n$-vertex graph}\}$, which stands for the maximum number of orientations of $n$-vertex graphs with no strongly connected copies of $K_k$.
The following theorem is the main result of this paper.

\begin{theorem}
  \label{thm:Skn}
  Let $n\geq k\geq 4$
  Then,
  \[
    S_k(n) = \left\{
  \begin{array}{cl}
    40 & \text{ if } \,n=k=4,\nonumber\smallskip  \\
    2^{t_{k-1}(n)} & \text{ if } \,n\geq 5 \text{ or } \,k\geq 5.\nonumber
  \end{array}
\right.
\]
  Furthermore, $K_4$ is the only $\S_4$-extremal graph and, if $n\geq 5$ or $k\geq 5$, then the Tur\'an graph $T_{k-1}(n)$ is the only
  $\S_k$-extremal graph.
\end{theorem}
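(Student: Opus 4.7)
The plan is to reduce to complete multipartite graphs via Lemma~\ref{lemma:multi} and then to identify the extremal multipartite structure by induction on $n$, using the clique-extension bounds of Section~\ref{sec:main}.

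The lower bound $S_k(n) \geq 2^{t_{k-1}(n)}$ is immediate since $T_{k-1}(n)$ is $K_k$-free. The exceptional case $n = k = 4$ is handled by direct computation: one has $S_4(K_4) = 2^{\binom{4}{2}} - \SC(K_4) = 64 - 24 = 40$, while every other $4$-vertex graph $G$ satisfies $|E(G)| \leq 5$ and hence $S_4(G) \leq 2^5 = 32$, so $K_4$ is the unique $\S_4$-extremal graph. For the remaining range ($n \geq 5$ or $k \geq 5$), I apply Lemma~\ref{lemma:multi} to restrict to complete multipartite $\S_k$-extremal graphs $G$; writing $V_1, \ldots, V_r$ for the parts with $|V_1| \leq \cdots \leq |V_r|$, if $r \leq k-1$ then $G$ is $K_k$-free, so $S_k(G) = 2^{|E(G)|} \leq 2^{t_{k-1}(n)}$ with equality if and only if $G = T_{k-1}(n)$. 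The crux is to rule out $r \geq k$.

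I plan to prove by induction on $n$ the claim: for $n \geq k \geq 4$ with $n \geq 5$ or $k \geq 5$, any $n$-vertex complete multipartite graph with $r \geq k$ parts satisfies $S_k(G) < 2^{t_{k-1}(n)}$. The base cases $G \in \{K_k, K_{k+1}\}$ are covered by Corollary~\ref{corollary:small-cliques}; the remaining base case $G = K_{k+1}-e$ (and, for $k = 4$, also $n = 5$) is settled by expanding $S_k(G) = \sum_{\vec K} c_{\S_k}(u, \vec K)\, c_{\S_k}(v, \vec K)$ over orientations $\vec K$ of the $K_{k-1}$ induced by $V(G) \setminus \{u, v\}$ and bounding each factor via Lemma~\ref{lemma:cliqueExtension}. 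For the inductive step, pick $v$ in the smallest part $V_1$ and set $G' = G - v$; whether $G'$ still has $\geq k$ parts (so the inductive hypothesis applies) or exactly $k - 1$ parts (so $G'$ is $K_k$-free), one obtains $S_k(G') \leq 2^{t_{k-1}(n-1)}$, reducing the problem to bounding $c_{\S_k}(v, \vec G')$ uniformly over $\vec G'$.

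The main obstacle is producing an extension bound sharp enough to close this induction. A single $K_{k-1}$ in $N_G(v)$ yields, by Lemma~\ref{lemma:cliqueExtension}, only $c_{\S_k}(v, \vec G') \leq (3/4) \cdot 2^{n - |V_1|}$, which is insufficient against the target $t_{k-1}(n) - t_{k-1}(n-1) = n - \lceil n/(k-1) \rceil$. The remedy is to locate $|V_2|$ pairwise disjoint copies of $K_{k-1}$ in $N_G(v)$—one vertex per part from $V_2, \ldots, V_k$, repeated $|V_2|$ times—and iterate, yielding $c_{\S_k}(v, \vec G') \leq (3/4)^{|V_2|} \cdot 2^{n - |V_1|}$; the resulting inequality $|V_1| + |V_2|\log_2(4/3) > \lceil n/(k-1)\rceil$ closes the induction in the balanced regime. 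The stubborn subcase is when many parts are singletons (e.g., $|V_1| = |V_2| = 1$). There I would switch to removing the adjacent pair $\{v, u\}$ with $u \in V_2$ and apply Lemma~\ref{lemma:cliqueExtension-edge} to a $K_{k-1}$ contained in $V_3 \cup \cdots \cup V_r$ (available when $r \geq k + 1$), while the residual cases with $r = k$ and several singletons would be dealt with by a direct computation exploiting that every $K_k$ containing $v$ or $u$ must contain both. Checking that the resulting exponents balance against $t_{k-1}(n) - t_{k-1}(n-1)$ (or its two-vertex analogue) constitutes the bulk of the calculational work.
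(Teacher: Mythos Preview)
Your overall architecture—reduce to complete multipartite via Lemma~\ref{lemma:multi}, then eliminate the case $r\ge k$ by induction—is exactly the paper's. The divergence is in the inductive engine: the paper removes an entire clique $K$ (of size $k+1$ if $r\ge k+1$, of size $k$ if $r=k$) and bounds
\[
S_k(G)\le S_k(K)\cdot\bigl((x-k+4)\,2^{k-3}\bigr)^{\,n-|K|}\cdot S_k(G\setminus K),
\]
feeding $S_k(G\setminus K)$ back into the induction and closing the exponents with the appendix Propositions~\ref{prop:turan-number-from-clique-k+1}--\ref{prop:turan-number-from-clique-k=4}. You instead remove a single vertex and try to squeeze enough savings out of disjoint $K_{k-1}$'s in its neighbourhood.

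That single-vertex step has a genuine gap. Your key inequality
\[
|V_1| + |V_2|\log_2(4/3) \;>\; \bigl\lceil n/(k-1)\bigr\rceil
\]
fails well outside the ``many singletons'' subcase you flag. Take $k=4$, $r=5$, and balanced parts $|V_i|=m$ (so $n=5m$): you would need $m(1+\log_2(4/3))\approx 1.415\,m > \lceil 5m/3\rceil$, which is false already for $m=3$ ($4.25\not>5$). The same happens for $k=5$, $r=6$, balanced. In fact the inequality holds in the balanced case only when $1.415(k-1)>r$, so it breaks as soon as $r\ge k+1$ for $k\in\{4,5\}$. Even if you push to the maximum number of disjoint $K_{k-1}$'s in $N_G(v)$ (rather than just $|V_2|$ of them), the $\log_2(4/3)\approx 0.415$ saving per clique is too weak: with $k=4$, $r=5$, $m=3$ you can pack at most four disjoint $K_3$'s in $N_G(v)$, giving $3+4(0.415)=4.66\not>5$. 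Your Lemma~\ref{lemma:cliqueExtension-edge} fallback is designed for the singleton case and does not cover these balanced $r=k+1$ configurations.

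The fix the paper uses is to exploit that larger cliques give much stronger per-vertex savings in Lemma~\ref{lemma:cliqueExtension} (factor $16/5$ for a $K_{k+1}$, versus $4/3$ for a $K_{k-1}$), and to remove the whole clique at once so that induction applies to $G\setminus K$ rather than $G-v$. If you want to salvage the one-vertex-at-a-time approach you would need, for $r\ge k+1$, to pack disjoint $K_k$'s (not $K_{k-1}$'s) in $N_G(v)$ and redo the arithmetic; but at that point the case analysis is no simpler than the paper's. A minor additional point: for $k=4$ the paper handles $5\le n\le 8$ by computer, so if you intend to avoid that you must also verify your induction launches correctly from $n=5$ through the $n-1=4$ anomaly where $S_4(4)=40>2^{t_3(4)}$.
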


 Before proving Theorem~\ref{thm:Skn}, recall that Lemma~\ref{lemma:multi} implies that every $\F$-extremal graph is complete multipartite for the family $\F$ of $k$-vertex tournaments with no source.
Therefore, to prove Theorem~\ref{thm:Skn} (for $n\geq 5$ or $k\geq 5$), it is enough to show that every complete multipartite $\F$-extremal graph is the $(k-1)$-partite Tur\'an graph.

\begin{proof}[Proof of Theorem~\ref{thm:Skn}]
  The proof is by induction on \(n\).
  First, note that if $n<k$ then any graph \(G\) of order \(n\) is \(K_k\)-free,
and hence has \(2^{|E(G)|}\) \(S_k\)-free orientations. 
Therefore, \(S_k(n) = 2^{n \choose 2} = 2^{t_{k-1}(n)}\).

  For $n=k=4$ we used a simple computer program to verify that $S_4(4)=40$ and that $K_4$ is the only $\S_4$-extremal graph.
  Also with a (simple) computer program we verified that $S_4(n) =  2^{t_{3}(n)}$ for $5\leq n\leq 8$ and that $T_3(n)$ is the only $\S_4$-extremal graph.
  Thus, we assume that either $k=4$ and $n\geq 9$, or $n\geq k\geq 5$.

Since no tournament in $\S_k$ contains a source, Lemma~\ref{lemma:multi} implies that every $\S_k$-extremal graph is complete multipartite. Let $G$ be an $\S_k$-extremal $r$-partite graph and assume that $r\geq k$. First, suppose that \(G\) contains a clique \(K\) of size \(k+1\).
From Corollary~\ref{corollary:small-cliques}, we have for \(k\geq 4\) that \(S_k(K)<2^{t_{k-1}(k+1)}\).
Note that, since \(G\) is a complete multipartite graph,
if \(u\notin V(K)\),
then \(u\) is adjacent to either \(k\) or \(k+1\) vertices of \(K\).
Thus, by Lemma~\ref{lemma:cliqueExtension}, we have \(c_{\S_k}(u,\vec K) \leq 5\cdot 2^{k-3}\).
Therefore, we have

\begin{align}\label{eq:boundSk}
	S_k(G)	&\leq S_k(K) (5\cdot 2^{k-3})^{n-k-1} S_k(G\setminus K)\nonumber \\
			&< 2^{t_{k-1}(k+1) + (\log 5 + k-3)(n-k-1)}S_k(G\setminus K).
\end{align}

If \(k=4\) and \(n-k-1\neq 4\), or \(k\geq 5\)
then, by the induction hypothesis, we have \(S_k(G\setminus K)\leq 2^{t_{k-1}(n-k-1)}\).
Therefore, from \eqref{eq:boundSk} and Proposition~\ref{prop:turan-number-from-clique-k+1} we have

\begin{align*}
	S_k(G)	&< 2^{t_{k-1}(k+1) + (\log 5 + k-3)(n-k-1)}\cdot S_k(G\setminus K) <2^{t_{k-1}(n)},
\end{align*}

A contradiction with the fact that $G$ is $\S_k$-extremal.
Now, suppose \(k=4\) and \(n-k-1=4\).
In this case, \(S_k(G\setminus K)\leq 40\),
and hence, from~\eqref{eq:boundSk} we have $S_4(G) < 2^{t_3(5) + 4(\log 5 + 1)}\cdot S_k(G\setminus K)$, which implies

\begin{align*}
	S_4(G)	&<  2^{8 + 4(\log 5 + 1)+\log 40} < 2^{27} =2^{t_3(9)},
\end{align*}
a contradiction.
Therefore, no $\S_k$-extremal graph contains a clique with $k+1$ vertices.

Since $G$ contains no clique with $k+1$ vertices, we may and shall assume that \(G\) is \(k\)-partite.
We first deal with the case $k\geq 5$.
If \(n=k\), then \(G\simeq K_k\),
and hence, by Corollary~\ref{corollary:small-cliques}, 
we have \(S_k(G) < 2^{t_{k-1}(k)}\).
Thus, we may assume that \(n\geq k+1\).
Let \(K\) be a clique of size \(k\) in \(G\).
Since \(G\) is a complete \(k\)-partite graph,
if \(u\notin V(K)\),
then \(u\) is adjacent to precisely \(k-1\) vertices of \(K\).
Thus, by Lemma~\ref{lemma:cliqueExtension}, we have \(c_{\S_k}(u,\vec K) \leq 3\cdot 2^{k-3}\).
Therefore, we have 

\begin{align*}
	S_k(G)	&\leq S_k(K) (3\cdot 2^{k-3})^{n-k} \cdot S_k(G\setminus K). 
\end{align*}

Clearly, if \(G\setminus K\) is \((k-1)\)-partite, then \(S_k(G\setminus K)\leq 2^{t_{k-1}(n-k)}\);
and if \(G\setminus K\) is not \((k-1)\)-partite, 
then, by the induction hypothesis, we have \(S_k(G\setminus K)\leq 2^{t_{k-1}(n-k)}\).
Moreover, from Corollary~\ref{corollary:small-cliques}, we have \(S_k(K) < 2^{t_{k-1}(k)}\).
Therefore, by Proposition~\ref{prop:turan-number-from-clique-k} we have

\begin{align*}
	S_k(G)	&< 2^{t_{k-1}(k) + (\log 3 + k-3)(n-k)}\cdot S_k(G\setminus K) \\
			&\leq 2^{t_{k-1}(k) + (\log 3 + k-3)(n-k) + t_{k-1}(n-k)} \\
			&<2^{t_{k-1}(n)},
\end{align*}
which contradicts the fact that $G$ is $\S_k$-extremal.

It remains deal with the case for $k=4$ and $n\geq 9$ (where $G$ contains no clique with $k+1$ vertices).
For that, note that removing a copy of $K_4$ from $G$ results in a graph that is not a copy of $K_4$.
Then, by arguments analogous to the ones above (for $k\geq 5$), replacing Proposition~\ref{prop:turan-number-from-clique-k} with Proposition~\ref{prop:turan-number-from-clique-k=4} and using that $S_4(K_4) = 40$, we get $S_4(G) < 2^{t_{3}(n)}$, getting the desired contradiction.
\end{proof}

\section{Final remarks and open problems}
\label{sec:open}

In this paper, given a fixed family $\F$ of oriented graphs and a positive integer~$n$, we consider the quantity $D(n,\F)$, the maximum number of $\F$-free orientations of any $n$-vertex graph.
Given $k\geq 4$ and $\F \in \{\S_k, \R_k\}$, where $\S_k$ and $\R_k$ are, respectively, the families of $k$-vertex tournaments that are strongly connected and non-transitive, we determined $D(n,\F)$ and the corresponding extremal $n$-vertex graphs for all values of~ $n$. For $k=3$, we have $\S_3 =  \R_3 = \{\circcycle_3\}$ and Theorem~\ref{conj:AY} determines the value of $D(n,\{\circcycle_3\})$ for all values of $n$.  

This type of extremal problem was first investigated by Alon and Yuster~\cite{AlYu06}, who showed a that the Tur\'{a}n graph $T_{k-1}(n)$ is the unique $\{\vec{K}_k\}$-extremal graph for sufficiently large $n$.
For more than 10 years, there had been no substantial contribution to this problem, but this has changed in the past year~\cite{ArBoMo20+,BuSu20+}.
This renewed interest in this problem comes at a time of many advances in a related problem about the number of distinct edge-colorings avoiding some fixed color-patterns, known as the Erd\H{o}s-Rothschild problem (see, e.g.,~\cite{BL19, BBH20, CGM20, rainbow_kn, PSY16}).

In the remainder of this section, we raise a few questions whose answers might substantially improve our understanding of $D(n,\F)$ and of the corresponding $n$-vertex $\F$-extremal graphs.

Given an integer $k \geq 3$ and a $k$-vertex tournament $\vec{K}_k$, define $n_0(\vec{K}_k)$ as follows:
if $T_{k-1}(n)$ is the unique $\vec{K}_k$-extremal graph for every $n \geq 1$, we set $n_0(\vec{K}_k)=1$.
Otherwise, set $n_0(\vec{K}_k) = n_0$, where $n_0$ is an integer such that $T_{k-1}(n)$ is the unique $n$-vertex $\vec{K}_k$-extremal graph for every $n \geq n_0$, but $T_{k-1}(n_0 - 1)$ is not the unique $\vec{K}_k$-extremal graph with $n_0-1$ vertices.
For the next problem, let  $\mathcal{O}_k$ be the set of all orientations of $K_k$ and define $n_{\max}(k) = \max\{n_0(\vec{K}_k) \colon \vec{K}_k \in \mathcal{O}_k \}$.

\begin{problem}\label{prob1}
Given an integer $k \geq 3$, determine $n_{\max}(k)$ and characterize the orientations $\vec{K}_k$ for which $n_0(\vec{K}_k) = n_{\max}(k)$. 
\end{problem}  

For $k=3$, the answer to Problem~\ref{prob1} is known. The only orientation of $K_3$ other than $\circcycle_3$ is the transitive tournament $\vec{T}_3$. It is easy to see that $n_0(\vec{T}_3)=1$ and Theorem~\ref{conj:AY} shows that $n_0(\circcycle_3)=8$, so that $n_{\max}(3)=8$.
However, the answer to Problem~\ref{prob1} for $k\geq 4$ remains open. 

\begin{problem}\label{prob2}
  Determine families of oriented graphs $\F$ for which all $n$-vertex $\F$-extremal graphs are complete multipartite and verify whether these graphs are necessarily balanced.
\end{problem}   

In connection with Problem~\ref{prob2}, Lemma~\ref{thm_existence} states that if $\F$ is an arbitrary family of \emph{tournaments}, then, for every $n$, \emph{there exists} an $n$-vertex $\F$-extremal graph that is complete multipartite. For specific families $\F$ of tournaments, Lemma~\ref{lemma:multi} does more and establishes that all $\F$-extremal graphs are complete multipartite. However, neither proofs ensure balancedness, even though all $\F$-extremal graphs known so far are balanced complete multipartite graphs\footnote{At least in situations where the (Tur\'{a}n) extremal graphs of the graphs whose orientations are in $\F$ are complete multipartite graphs.}.

For the next problem given a graph $F$, we say an $n$-vertex graph $H$ is $F$-extremal if $|E(H)|$ is maximum among all $n$-vertex graphs that do not contain a copy of $F$.

\begin{problem}\label{prob3}
Determine the graphs $F$ such that there exists an orientation $\vec{F}$ with the property that, for arbitrarily large values of $n$, there are $n$-vertex $\vec{F}$-extremal graphs that are not $F$-extremal.
\end{problem}   

As mentioned before, it is known that no complete graph satisfies the property described in Problem~\ref{prob3}. However, if $F$ is the complete bipartite graph $K_{1,3}$ and $\vec{F}$ is the orientation where all edges are oriented towards its leafs, it is easy to see that, for any positive integer $n$ divisible by 4, a graph given by $n/4$ disjoint copies of $K_4$ admits $32^{n/4}=2^{5n/4}$ distinct $\vec{F}$-free copies, while any $K_{1,3}$-extremal graph (which has maximum degree at most $2$) admits at most $2^{n}$ orientations.

We present now one final open problem concerning the uniqueness of extremal graphs.

\begin{problem}\label{prob4}
  Determine the families of orientations $\F$ with the property that, for all $n\geq 1$, there is a unique $n$-vertex $\vec F$-extremal graph.
\end{problem}

We have seen that the families $\S_k$ and $\R_k$ satisfy the property described in Problem~\ref{prob4}.
As a consequence of the proof of~\cite[Theorem 1.1]{AlYu06}, at least for large $n$, any family $\F$ of tournaments admits a unique $n$-vertex $\F$-extremal graph.
However, this does not necessarily extend to all values of $n$. For instance, consider the family $\mathcal{U}_4$ of all orientations of $K_4$ that have no source. Then $K_4$ and $T_3(4)$ both admit 32 distinct $\mathcal{U}_4$-free orientations, and are $\mathcal{U}_4$-extremal graphs on four vertices. Since $\S_4 \subset \mathcal{U}_4$, Theorem~\ref{thm:Skn} implies that $T_3(n)$ is the unique $n$-vertex $\mathcal{U}_4$-extremal graph for all $n \geq 5$.

\bibliographystyle{amsplain}
\providecommand{\bysame}{\leavevmode\hbox to3em{\hrulefill}\thinspace}
\providecommand{\MR}{\relax\ifhmode\unskip\space\fi MR }
\providecommand{\MRhref}[2]{%
  \href{http://www.ams.org/mathscinet-getitem?mr=#1}{#2}
}
\providecommand{\href}[2]{#2}

\section*{Appendix}
\begin{proposition}\label{prop:turan-number-from-clique-k+1}
	Let \(n\) and \(k\) be positive integers such that \(n> k\geq 4\).
	Then \[t_{k-1}(k+1) + (\log 5 + k -3)(n-k-1) + t_{k-1}(n-k-1) < t_{k-1}(n).\]
\end{proposition}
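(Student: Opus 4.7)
The plan is to prove the inequality by direct computation. Using the closed form
\[
t_{k-1}(n) = \binom{k-1}{2}q^2 + (k-2)rq + \binom{r}{2},
\]
where $n = q(k-1) + r$ with $q \geq 1$ and $0 \leq r \leq k-2$, together with $t_{k-1}(k+1) = \binom{k+1}{2} - 2$ (which holds since the balanced $(k-1)$-partition of $k+1$ consists of two parts of size $2$ and $k-3$ singletons), the proof reduces to a collection of linear inequalities in $q$. One first observes that both sides coincide when $n = k+1$, so it suffices to handle $n \geq k+2$; this is the only range in which the proposition is actually invoked in the proof of Theorem~\ref{thm:Skn}, as the case $n = k+1$ follows from \eqref{eq:boundSk} alone.

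The argument splits into three cases according to $r$. Writing $n-k-1 = q'(k-1) + r'$ with $0 \leq r' \leq k-2$: one has $(q', r') = (q-1, r-2)$ when $r \geq 2$; $(q', r') = (q-2, k-2)$ when $r = 1$ (forcing $q \geq 2$); and $(q', r') = (q-2, k-3)$ when $r = 0$ (again $q \geq 2$). In each case, substituting the closed form for $t_{k-1}(n-k-1)$ and subtracting $t_{k-1}(k+1)$ turns the target into a linear inequality in $q$ (and, in the first case, also in $r$) whose coefficient of $q$ equals
\[
A_k := 3k - 5 - (k-1)\log_2 5.
\]
A short calculation shows that the minimum of the left-hand side over admissible $(q, r)$ is $A_k$ itself in Case~(i) (attained at $(q, r) = (2, 2)$ when $k = 4$) or $3 - \log_2 5$ in Case~(i) when $k \geq 5$ (attained at $(q, r) = (1, 3)$), and $3k - 7 - (k-2)\log_2 5$ and $3k - 9 - (k-3)\log_2 5$ at $q = 2$ in Cases~(ii) and~(iii) respectively.

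Each of these quantities is positive. The crucial estimate is $A_k > 0$, which rearranges to $\log_2 5 < 3 - 2/(k-1)$; for $k = 4$ this becomes $\log_2 5 < 7/3$, a direct consequence of $5^3 = 125 < 128 = 2^7$, and for $k \geq 5$ the bound is strictly weaker. The same numerical fact $\log_2 5 < 7/3$ yields positivity of the other quantities by routine arithmetic. The main obstacle is the tightness at $k = 4$: since $A_4 = 7 - 3 \log_2 5 \approx 0.034$ is genuinely small, no crude approximation (such as replacing $\lfloor x \rfloor$ by $x$ in a telescoping formula for $t_{k-1}(n) - t_{k-1}(n-k-1)$) is viable, and the closed-form Tur\'an values must be used throughout the argument.
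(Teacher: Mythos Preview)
Your approach is correct and essentially the same as the paper's: both proofs perform a direct computation with the identical three-way case split on the residue $r$ of $n$ modulo $k-1$, and both ultimately rest on the numerical fact $\log_2 5 < 7/3$ (which the paper phrases as $3(\log 5 - 3) < -2$). The organizational differences are minor: you work with the closed form $t_{k-1}(n)=\binom{k-1}{2}q^2+(k-2)rq+\binom{r}{2}$ and isolate the linear-in-$q$ difference with slope $A_k$, whereas the paper uses the equivalent form $t_{k-1}(n)=\binom{n}{2}-r\binom{q+1}{2}-(k-1-r)\binom{q}{2}$ together with the identity $\binom{k+1}{2}+(k+1)(n-k-1)+\binom{n-k-1}{2}=\binom{n}{2}$ before bounding the logarithmic term.

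One point worth noting: you correctly observe that the inequality becomes an equality at $n=k+1$, so the proposition as stated is false there and the argument only works for $n\ge k+2$. The paper's proof has the same feature (its strict inequality $(\log 5-3)\,3q' < -2q'$ degenerates to $0<0$ when $q'=r'=0$) but does not comment on it. As you say, this boundary case is harmless for the application, since in the proof of Theorem~\ref{thm:Skn} the case $n=k+1$ is covered directly by Corollary~\ref{corollary:small-cliques} via~\eqref{eq:boundSk} without invoking the proposition.
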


\begin{proof}
	First, let \(q\) and \(r\) be such that \(n = (k-1)q + r\), where \(0\leq r< k-1\),
and let \(q'\) and \(r'\) be such that \(n-k-1 = (k-1)q' + r'\), where \(0\leq r'< k-1\).
Note that \(r'\equiv r-2\pmod{k-1}\),
and hence \((q',r')\in\{(q-2,k-3),(q-2,k-2),(q-1,r-2)\}\),
where \(r' = r-2\) if and only if \(r\geq 2\).
Also \(t_{k-1}(k+1) = {k+1 \choose 2} - 2\) and
\(t_{k-1}(n-k-1) = {n-k-1 \choose 2} - r'{q'+1\choose 2} - (k-1-r'){q'\choose 2}\).
Thus, we have

\begin{align*}
	& t_{k-1}(k+1) + (\log 5 + k-3)(n-k-1) + t_{k-1}(n-k-1) \\
	& = {k+1\choose 2} -2 + (k+1)(n-k-1) + (\log 5 -4)(n-k-1) \\
	& + {n-k-1\choose 2} - r'{q'+1\choose 2} - (k-1-r'){q'\choose 2} \\
	& = {n\choose 2}-2+ (\log 5 -4)(n-k-1)- r'{q'+1\choose 2} - (k-1-r'){q'\choose 2} \\
	& ={n\choose 2}-2+ (\log 5 -3)(n-k-1)- r'{q'+2\choose 2} - (k-1-r'){q'+1\choose 2},
\end{align*}
where in the last equality we used that \(n-k-1 = r'(q'+1) + (k-1-r')q'\).

Since \(k\geq 4\), we have

\begin{align} \label{eq_aux}
(\log 5 -3)(n-k-1) \nonumber
&= (\log 5 -3)((k-1)q'+r') \\
&\stackrel{(\ast)}{\leq} (\log 5 -3)(k-1)q' \\ \nonumber
&\leq (\log 5 -3)3q' \\ \nonumber
&<  -2q', 
\end{align}
and hence, 
\begin{align*}
	& {n\choose 2}-2+ (\log 5 -3)(n-k-1)- r'{q'+2\choose 2} - (k-1-r'){q'+1\choose 2} \\
	& <	{n\choose 2}-2 - 2q'- r'{q'+2\choose 2} - (k-1-r'){q'+1\choose 2} 
\end{align*}

In what follows, we divide the proof according to \((q',r')\).
If \((q',r') = (q-1,r-2)\), then \(r\geq 2\) and we have

\begin{align*}
	&{n\choose 2}-2 - 2q'- r'{q'+2\choose 2} - (k-1-r'){q'+1\choose 2} \\
	& = {n\choose 2}  - (r'+2){q'+2\choose 2} - (k-1-r'-2){q'+1\choose 2} \\
	& = {n\choose 2}- r{q+1\choose 2} - (k-1-r){q\choose 2} \\
	& = t_{k-1}(n).
\end{align*}

If \((q',r') = (q-2,k-3)\), then \(r=0\) and we have

\begin{align*}
	& 	{n\choose 2}-2 - 2q'- r'{q'+2\choose 2} - (k-1-r'){q'+1\choose 2} \\
	&= 	{n\choose 2}-2 - 2q'- (k-3){q\choose 2} - 2{q-1\choose 2} \\
	& = {n\choose 2} - (k-1){q\choose 2} \\
	& = t_{k-1}(n).
\end{align*}

If \((q',r') = (q-2,k-2)\), then \(r=1\) and we obtain the following variation of~\eqref{eq_aux}. 

\begin{align*}
(\log 5 -3)(n-k-1) &= (\log 5 -3)((k-1)q'+r') \\ 
&\leq (\log 5 -3)\left((k-1)q'+(k-2)\right) \\
&\leq (\log 5 -3) (3q'+2) <-2q'-1
\end{align*}

Therefore,
\begin{align*}
	& {n\choose 2}-2+ (\log 5 -3)(n-k-1)- r'{q'+2\choose 2} - (k-1-r'){q'+1\choose 2} \\
	&< 	{n\choose 2}-2 -2q'-1- r'{q'+2\choose 2} - (k-1-r'){q'+1\choose 2} \\
	&= 	{n\choose 2}-3 - 2q'- (k-2){q\choose 2} - {q-1\choose 2} \\
	& = {n\choose 2}+1-2q - (k-2){q\choose 2} - {q-1\choose 2} \\
	& = {n\choose 2} - (k-1-r){q\choose 2} - r{q+1\choose 2}\\
	& = t_{k-1}(n).
\end{align*}
\end{proof}

\begin{proposition}\label{prop:turan-number-from-clique-k}
	Let \(n\) and \(k\) be positive integers such that \(n> k\geq 4\).
	Then \[t_{k-1}(k) + (\log 3 +k-3)(n-k) + t_{k-1}(n-k) < t_{k-1}(n).\]
\end{proposition}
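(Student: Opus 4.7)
The plan is to mirror the argument used for Proposition~\ref{prop:turan-number-from-clique-k+1}. Write $n = (k-1)q + r$ and $n-k = (k-1)q' + r'$ with $0 \leq r, r' < k-1$. Since $k \equiv 1 \pmod{k-1}$, we have $r' \equiv r - 1 \pmod{k-1}$, so only two cases arise: $(q',r') = (q-1, r-1)$ when $r \geq 1$, and $(q',r') = (q-2, k-2)$ when $r = 0$. Using $t_{k-1}(k) = \binom{k}{2} - 1$, the formula $t_{k-1}(n-k) = \binom{n-k}{2} - r'\binom{q'+1}{2} - (k-1-r')\binom{q'}{2}$, and the identity $\binom{k}{2} + k(n-k) + \binom{n-k}{2} = \binom{n}{2}$, I would rewrite the left-hand side of the claimed inequality as
\[
\binom{n}{2} - 1 + (\log 3 - 3)(n-k) - r'\binom{q'+1}{2} - (k-1-r')\binom{q'}{2}.
\]

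Next I would apply the identity $n - k = r'(q'+1) + (k-1-r')q'$ to shift the binomial coefficients one index upward, turning the last two terms into $-r'\binom{q'+2}{2} - (k-1-r')\binom{q'+1}{2} + (n-k)$ and putting the expression in the form
\[
\binom{n}{2} - 1 + (\log 3 - 2)(n-k) - r'\binom{q'+2}{2} - (k-1-r')\binom{q'+1}{2}.
\]
In the case $r \geq 1$, substituting $q' + 1 = q$, $q' + 2 = q + 1$, $r' = r - 1$ and $k-1-r' = k-r$ and then subtracting from $t_{k-1}(n) = \binom{n}{2} - r\binom{q+1}{2} - (k-1-r)\binom{q}{2}$, the binomial contributions collapse to $-\binom{q+1}{2} + \binom{q}{2} = -q$, and the desired inequality reduces to $(2 - \log 3)(n-k) > q - 1$. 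In the case $r = 0$, where $q'+1 = q-1$, $q'+2 = q$, $r' = k-2$ and $k-1-r' = 1$, the analogous cancellation yields the slightly sharper-looking inequality $(2 - \log 3)(n-k) > q - 2$.

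The main (and essentially only) obstacle will be verifying these two numerical inequalities. In either case, as long as $q \geq 2$, the bound $n - k \geq (k-1)(q-1) \geq 3(q-1)$ (using $k \geq 4$) reduces both inequalities to $3(2 - \log 3) > 1$, equivalently $\log 3 < 5/3$, which holds. The degenerate sub-cases are then handled directly: in the case $r \geq 1$ with $q = 1$, the hypothesis $n > k$ forces $r \geq 2$, so $(2-\log 3)(n-k) \geq 2 - \log 3 > 0 = q - 1$; in the case $r = 0$, the hypothesis $n > k$ forces $q \geq 2$, and the boundary value $q = 2$ gives $n-k = k-2 \geq 2$, which suffices. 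The whole argument amounts to algebraic bookkeeping driven by the comfortable numerical margin $\log 3 < 5/3$.
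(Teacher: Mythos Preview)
Your approach is essentially identical to the paper's: same division-with-remainder setup, same two cases $(q',r')\in\{(q-1,r-1),(q-2,k-2)\}$, same rewriting via $\binom{k}{2}+k(n-k)+\binom{n-k}{2}=\binom{n}{2}$, and the same index shift using $n-k=r'(q'+1)+(k-1-r')q'$. The only cosmetic difference is that the paper first bounds $(\log 3 - 2)(n-k) < -q'$ and then collapses the binomials, whereas you collapse first and then verify the residual inequality $(2-\log 3)(n-k)>q-1$ (resp.\ $q-2$).

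There is one small slip to fix. The bound $n-k\geq (k-1)(q-1)$ that you invoke ``in either case'' fails when $r=0$: there $n=(k-1)q$, so $n-k=(k-1)(q-1)-1$, off by one. Your subsequent handling of the $r=0$ case only explicitly checks $q=2$ and otherwise falls back on the incorrect bound, so as written there is a gap for $r=0$, $q\geq 3$. The repair is immediate: in that case use $n-k\geq 3(q-1)-1$, and since the target inequality is the weaker $(2-\log 3)(n-k)>q-2$, the same numerical fact $3(2-\log 3)>1$ (together with $2-\log 3<1$) still closes it. Alternatively, mimic the paper exactly and bound $(\log 3 - 2)(n-k)\leq (\log 3 - 2)(k-1)q' < -q'$ before substituting, which sidesteps the issue.
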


\begin{proof}
	First, let \(q\) and \(r\) be such that \(n = (k-1)q + r\), where \(0\leq r< k-1\),
and let \(q'\) and \(r'\) be such that \(n-k = (k-1)q' + r'\), where \(0\leq r'< k-1\).
Note that \(r'\equiv r-1\pmod{k-1}\),
and hence \((q',r')\in\{(q-2,k-2),(q-1,r-1)\}\),
where \(r' = k-2\) if and only if \(r=0\).
Note that \(t_{k-1}(k) = {k \choose 2} - 1\) and
\(t_{k-1}(n-k) = {n-k \choose 2} - r'{q'+1\choose 2} - (k-1-r'){q'\choose 2}\).
Thus, we have

\begin{align*}
	& t_{k-1}(k) + (\log 3 +k-3)(n-k) + t_{k-1}(n-k) \\
	& = {k\choose 2} -1 + k(n-k) + (\log 3 -3)(n-k) 
	 + {n-k\choose 2} - r'{q'+1\choose 2} - (k-1-r'){q'\choose 2} \\
	& = {n\choose 2}-1+ (\log 3 -3)(n-k)- r'{q'+1\choose 2} - (k-1-r'){q'\choose 2}
\end{align*}

Since \(n-k = r'(q'+1) + (k-1-r')q'\),
we have

\begin{align*}
	& {n\choose 2}-1 +(\log 3 -3)(n-k)- r'{q'+1\choose 2} - (k-1-r'){q'\choose 2} \\
	& ={n\choose 2}-1+(\log 3 -2)(n-k)- r'{q'+2\choose 2} - (k-1-r'){q'+1\choose 2}
\end{align*}

Since \(k\geq 4\), we have 
\[
(\log 3 -2)(n-k) = (\log 3 -2)((k-1)q'+r') \leq (\log 3 -2)(k-1)q' \leq (\log 3 -2)3q' <  -q', 
\]
In what follows, we divide the proof according to \((q',r')\).
If \((q',r') = (q-1,r-1)\), then \(r\geq 1\) and we have

\begin{align*}
	& {n\choose 2}-1+(\log 3 -2)(n-k)- r'{q'+2\choose 2} - (k-1-r'){q'+1\choose 2} \\
	& <{n\choose 2}-1-q'- r'{q'+2\choose 2} - (k-1-r'){q'+1\choose 2} \\
	& ={n\choose 2}- (r'+1){q'+2\choose 2} - (k-1-r'-1){q'+1\choose 2} \\
	& = {n\choose 2}- r{q+1\choose 2} - (k-1-r){q\choose 2} \\
	& = t_{k-1}(n).
\end{align*}

If \((q',r') = (q-2,k-2)\), then \(r=0\) and we have

\begin{align*}
	& 	{n\choose 2}-1 - q'- r'{q'+2\choose 2} - (k-1-r'){q'+1\choose 2} \\
	& =	{n\choose 2}-1-q'- (k-2){q'+2\choose 2} - {q'+1\choose 2}\\
	& =	{n\choose 2}- (k-1){q\choose 2}\\
	& = t_{k-1}(n).
\end{align*}
\end{proof}

\begin{proposition}\label{prop:turan-number-from-clique-k=4}
	Let \(n\) be a positive integer such that \(n\geq 9\).
	Then \[\log 40 + (\log 3 +1)(n-4) + t_{3}(n-4) < t_{3}(n).\]
\end{proposition}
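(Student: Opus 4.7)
The plan is to adapt the case analysis of Propositions~\ref{prop:turan-number-from-clique-k+1} and~\ref{prop:turan-number-from-clique-k} to this $k = 4$, $\log 40$ setting. I would write $n = 3q + r$ and $n - 4 = 3q' + r'$ with $0 \le r, r' \le 2$. Since $-4 \equiv -1 \pmod 3$, the three possibilities are $(r, r', q') = (0, 2, q-2)$, $(1, 0, q-1)$, and $(2, 1, q-1)$, exactly parallel to the two cases appearing in Proposition~\ref{prop:turan-number-from-clique-k} but with the residues shifted.

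In each case, I would substitute the explicit formula $t_3(m) = {m \choose 2} - s{p+1 \choose 2} - (3-s){p \choose 2}$ (where $m = 3p + s$) into both $t_3(n)$ and $t_3(n - 4)$, and compute the difference directly. A short expansion yields
\begin{equation*}
t_3(n) - t_3(n - 4) =
\begin{cases}
8q - 5 & \text{if } r = 0, \\
8q - 3 & \text{if } r = 1, \\
8q     & \text{if } r = 2.
\end{cases}
\end{equation*}
After rearranging, the target inequality reduces in each case to $(3 \log 3 - 5)q < C_r$, where $C_0 = \log(81/40) - 1$, $C_1 = \log(27/40)$, and $C_2 = \log(36/40)$. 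Since $\log 3 < 5/3$, the coefficient $3\log 3 - 5$ is strictly negative, so each inequality becomes a lower bound of the form $q > C_r/(3 \log 3 - 5)$, and a short numerical check shows that $q \ge 3$ suffices in every case (equivalently $n \ge 9$, with the binding case being $r = 1$, which requires $n \ge 10$).

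The hard part is how tight the margin is. The slope gap between the two sides of the target inequality is $8/3 - (\log 3 + 1) = 5/3 - \log 3 \approx 0.08$, which must absorb the constant $\log 40 \approx 5.32$ together with the small offsets coming from the residue corrections. This is exactly why $n \ge 9$ appears in the hypothesis: the worst case is $r = 1$, $n = 10$, where $t_3(10) - t_3(6) = 21$ while $\log 40 + 6(\log 3 + 1) \approx 20.83$, a slack of only about $0.17$. Once the base cases $n \in \{9, 10, 11\}$ (one per residue class at the smallest admissible $q$) are verified, the inequality propagates automatically to all larger $n$, because $3 \log 3 - 5 < 0$ guarantees that the right-hand side grows strictly faster than the left-hand side as $q$ increases within each residue class.
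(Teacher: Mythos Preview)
Your proposal is correct and follows essentially the same strategy as the paper: write \(n=3q+r\), \(n-4=3q'+r'\), split into the three residue cases, and reduce to a linear inequality in \(q\) that holds once \(q\ge 3\). The only cosmetic difference is bookkeeping: the paper manipulates the binomial-coefficient expression for \(t_3(n-4)\) until it matches \(t_3(n)\) and bounds the leftover term \((\log 40-6)+(\log 3-2)(n-4)<-q'-1\), whereas you compute the closed form \(t_3(n)-t_3(n-4)\in\{8q-5,\,8q-3,\,8q\}\) and rearrange directly; both routes arrive at the same binding case \(r=1\), \(q=3\).
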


\begin{proof}
	First, let \(q\) and \(r\) be such that \(n = 3q + r\), where \(0\leq r< 3\),
and let \(q'\) and \(r'\) be such that \(n-4 = 3q' + r'\), where \(0\leq r'< 3\).
Note that \(r'\equiv r-1\pmod{3}\),
and hence \((q',r')\in\{(q-2,2),(q-1,r-1)\}\),
where \(r' = 2\) if and only if \(r=0\).
Note that \(t_{3}(n-4) = {n-4 \choose 2} - r'{q'+1\choose 2} - (3-r'){q'\choose 2}\).
Thus, we have

\begin{align*}
	& \log 40 + (\log 3 +1)(n-4) + t_{3}(n-4) \\
	& = 6 + (\log 40-6) + (\log 3 -3)(n-4) + 4(n-4)
	 + {n-4\choose 2} - r'{q'+1\choose 2} - (3-r'){q'\choose 2} \\
	& = {n\choose 2} + (\log 40-6)+ (\log 3 -3)(n-4)- r'{q'+1\choose 2} - (3-r'){q'\choose 2}
\end{align*}

Since \(n-4 = r'(q'+1) + (3-r')q'\),
we have

\begin{align*}
	& {n\choose 2}+ (\log 40-6) +(\log 3 -3)(n-4)- r'{q'+1\choose 2} - (3-r'){q'\choose 2} \\
	& ={n\choose 2}+ (\log 40-6)+(\log 3 -2)(n-4)- r'{q'+2\choose 2} - (3-r'){q'+1\choose 2}
\end{align*}

It is not hard to check that, since \(n\geq 9\), we have \(r'>0\) or \(q'\geq 2\),
and hence

\[
(\log 40-6)+(\log 3 -2)(n-4) = (\log 40-6)+(\log 3 -2)(3q'+r') < -q'-1, 
\]

In what follows, we divide the proof according to \((q',r')\).
If \((q',r') = (q-1,r-1)\), then \(r\geq 1\) and we have

\begin{align*}
	& {n\choose 2}+ (\log 40-6)+(\log 3 -2)(n-4)- r'{q'+2\choose 2} - (3-r'){q'+1\choose 2} \\
	& <{n\choose 2}-1-q'- r'{q'+2\choose 2} - (3-r'){q'+1\choose 2} \\
	& ={n\choose 2}- (r'+1){q'+2\choose 2} - (2-r'){q'+1\choose 2} \\
	& = {n\choose 2}- r{q+1\choose 2} - (3-r){q\choose 2} \\
	& = t_{k-1}(n).
\end{align*}

If \((q',r') = (q-2,2)\), then \(r=0\) and we have

\begin{align*}
	& 	{n\choose 2}- (r'+1){q'+2\choose 2} - (2-r'){q'+1\choose 2} \\
	& =	{n\choose 2}- 3{q'+2\choose 2}  \\
	& =	{n\choose 2}- 3{q\choose 2}\\
	& = t_{k-1}(n).
\end{align*}
\end{proof}

\end{document}